\newtheorem{theorem}{Theorem}[section]
\newtheorem{conjecture}[theorem]{Conjecture}
\newtheorem{lemma}[theorem]{Lemma}
\newtheorem{remark}[theorem]{Remark}
\begin{document}

\vspace*{6cm}

\begin{center}
{\LARGE \textbf{On the Graovac-Ghorbani index for bicyclic graphs with no pendant vertices}}

\vspace{1cm}

{\large \textbf{Diego Pacheco$^1$, Leonardo de Lima$^2$, Carla Silva Oliveira$^3$}}\\

\vspace{0.5cm}

\emph{$^1$Departamento de Engenharia de Produ\c c\~ao, Centro Federal de Educa\c c\~ao Tecnol\'ogica Celso Suckow da Fonseca, Rio de Janeiro, Brazil}\\ 

\vspace{0.2cm}

\emph{$^2$ Departamento de Administra\c c\~ao Geral e Aplicada -Universidade Federal do Paran\'a, Av. Prefeito Loth\'ario Meissner -$2^{o}$andar, 80210-170, Curitiba, PR, Brazil}\\

\vspace{0.2cm}

\emph{$^3$Escola Nacional de Ciências Estat\'{\i}sticas, Rua Andr\'e Cavalcanti 106, Bairro de F\'atima, 20231-050, RJ, Brazil}\\ \vspace{0.2cm}  

diego.pacheco@aluno.cefet-rj.br, leonardo.delima@ufpr.br, carla.oliveira@ibge.gov.br
\vspace{0.4cm}

\end{center}

\begin{abstract}

Let $G=(V,E)$  be a simple undirected and  connected graph on $n$ vertices. The Graovac--Ghorbani index of a graph $G$ is defined as 
$$ABC_{GG}(G)= \sum_{uv \in E(G)} \sqrt{\frac{n_{u}+n_{v}-2} {n_{u} n_{v}}},$$
where $n_u$ is the number of vertices closer to vertex $u$ than vertex $v$ of the edge $uv \in E(G)$ and $n_{v}$ is defined analogously. It is well-known that all bicyclic graphs with no pendant vertices are composed by three families of graphs, which we denote by $\mathcal{B}_{n} = B_1(n) \cup B_2(n) \cup B_3(n).$ In this paper, we give an lower bound to the $ABC_{GG}$ index for all graphs in $B_1(n)$ and prove it is sharp by presenting its  extremal graphs. Additionally, we  conjecture a sharp lower bound to the $ABC_{GG}$ index for  all graphs in $\mathcal{B}_{n}.$

\medskip

\textbf{Keywords: }\emph{Graovac-Ghorbani index, Lower bound, Bicyclic graphs, Extremal graph}

%\textbf{AMS classification: }\emph{...,...}

\end{abstract}

\baselineskip=0.30in

\section{Introduction}

Let $G=(V,E)$  be a simple undirected and connected graph such that $n=|V|$ and $m=|E|$.  The degree of a vertex $v \in V$, denoted by $d_{v}$, is the number of edges incidents to $v$. The Graovac-Ghorbani index, \cite{Graovac2010}, is defined as  
\begin{equation}\label{eq:gg}
ABC(G)= \sum_{uv \in E(G)} \sqrt{\frac{n_{u}+n_{v}-2} {n_{u} n_{v}}},    
\end{equation}
where $n_u$ is the number of vertices closer to vertex $u$ than vertex $v$ of the edge $uv \in E(G)$ and $n_{v}$ is defined analogously. Note that equidistant vertices from $u$ and $v$ are not taken into account to compute $n_u$ and $n_v$ in  Equation (\ref{eq:gg}). The problem of finding graphs with maximum or minimum Graovac-Ghorbani index  turns to be a difficult problem for general graphs.  Some papers has been published in order to find extremal graphs to the Graovac-Ghorbani index of trees \cite{Rostami2013}, unicyclic \cite{Das2013} and bipartite graphs \cite{Dimitrov2017}. Some interesting results on this topic can be found at  \cite{Rostami2014,Das2016b,Furtula2016,Das2016,Dimitrov2017b}.
In 2016, Das in \cite{Das2016} posed the following question: ``Which graph has minimal $ABC_{GG}$ index among all bicyclic graphs?''  Motivated by this question we considered the $ABC_{GG}$ index for bicyclic graphs with no pendant vertices. 

In this paper, we explicitly give the $ABC_{GG}$ index for some bicyclic graphs of no pendant vertices and present a sharp lower bound. Also, we conjecture a lower bound to the $ABC_{GG}$ for all bicyclic graphs with no pendant vertices.

\section{Preliminaries}

A connected graph $G$ of order $n$ is called a bicyclic graph if $G$ has $n+1$ edges. Bicyclic graphs with no vertex of degree one are bicyclic graphs with no pendant vertices. Let $\mathcal{B}_{n}$ be the set of all bicyclic graphs of order $n$ with no pendant vertices. It is well-known, \cite{He2007}, that there are three types of bicyclic graphs containing no pendant vertices, which we denote here by $B_1(n), B_2(n)$ and $B_3(n).$ We use integers $p, \, q \geq 3$ to denote the size of the cycles, and $l\geq 1$ to denote the length of a path (i.e., the number of edges of a path).   Let $B_{1}(p,q)$ be the set of bicyclic graphs obtained from two vertex-disjoint cycles $C_p$ and $C_q$ by identifying a vertex $u$ of $C_p$ and a vertex $v$ of $C_q$ such that $n = p+q-1.$ Observe that all graphs in $B_{1}(p,q)$ have the same number of vertices but are not isomomorphic since the size of the cycles are not the same. Let $B_{2}(p,l,q)$ be the bicyclic graph obtained from two vertex-disjoint cycles $C_p$ e $C_q$, by joining vertices $v_1$ of $C_p$ and $u_l$ of $C_q$ by a new path  $v_{1}u_1,\ldots,u_{l-1}u_{l}$ with length $l$, such that  $n = p+q+l-1.$ Let $B_{3}(p,l,q)$ be the bicyclic graph obtained from a cycle $C_{p+q-2l}$ with vertex set given by $v_1v_2v_3,\ldots,v_{p+q-2l-1}v_{p+q-2l}v_{1}$ by joining vertices $v_1$ and $v_{p-l-2}$ by a new path $v_1u_1u_2,\ldots, u_{l-2}u_{l-1}u_{l}v_{p-l-2}$ with length $l$, where $n = p+q-l-1.$ Thus, $$B_1(n) =  \bigcup_{p, q \geq 3}  B_1(p,q), \; B_2(n) =  \bigcup_{p, q \geq 3, l \geq 1}  B_2(p,l,q) \mbox{  and  }  B_3(n) =  \bigcup_{p, q \geq 3, l \geq 1}  B_3(p,l,q).$$

Now, it is clear that  $\mathcal{B}_n = B_1(n) \cup B_2(n) \cup B_3(n).$ In Figure \ref{fig1} the general form of the graphs in  families $B_1(n), B_2(n)$ and $B_3(n)$ is displayed.

\begin{figure}[!h]
    \centering
    \includegraphics[height=8cm]{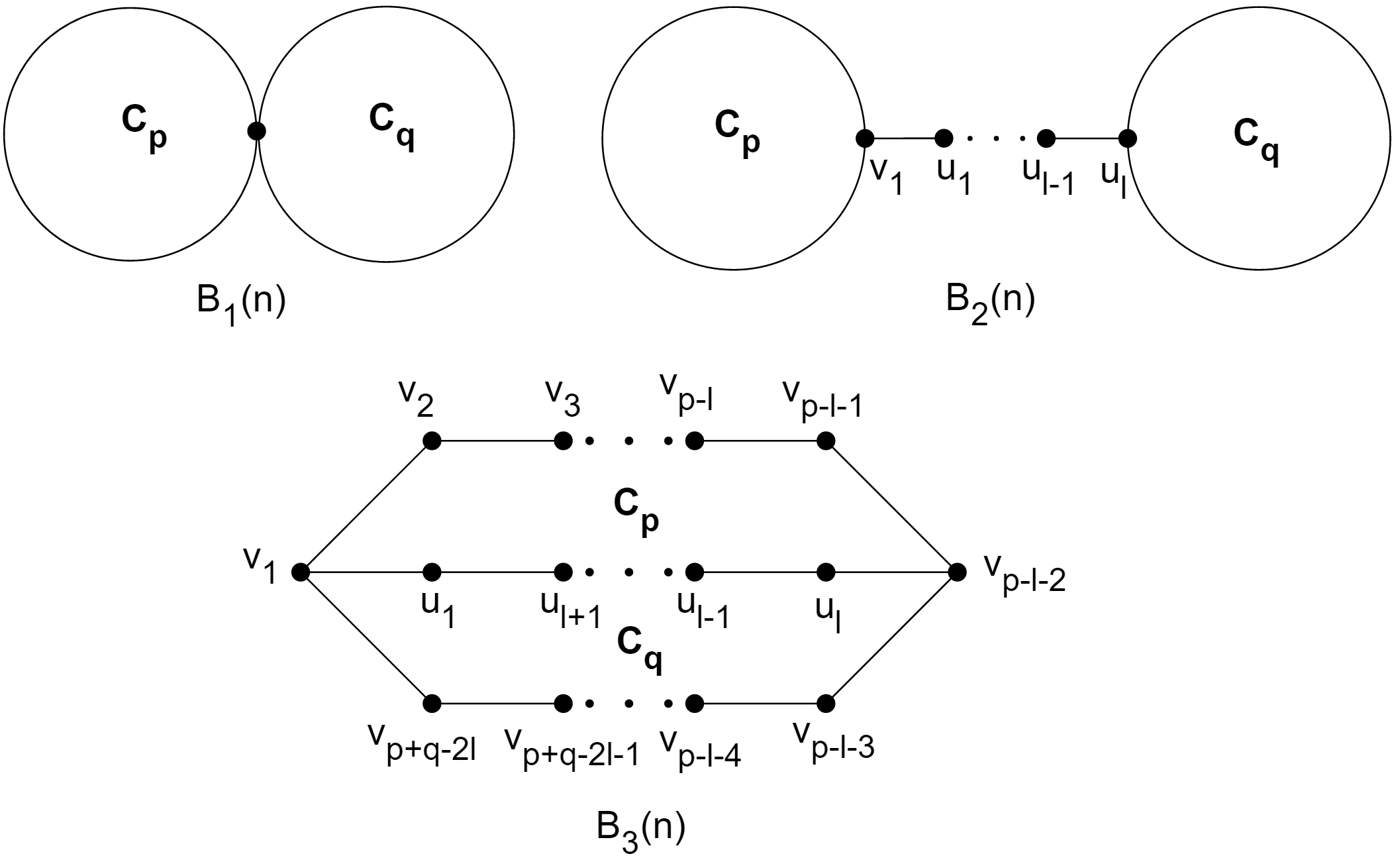}
    \caption{Families $B_1(n)$, $B_2(n)$ and $B_3(n)$ of bicyclic graphs with no pendant vertices}
    \label{fig1}
\end{figure}

\section{$ABC_{GG}$ index for  all graphs $G \in B_{1}(n)$}

In this section, we give an explicit formula to the $ABC_{GG}$ index of any graph in $B_1(n).$ In order to prove it, we consider the following cases:
\begin{itemize}
\item If $n$ is odd there are two possibilities: either $C_p$ and $C_q$ are both odd cycles or $C_p$ and $C_q$ are both even cycles.

\item If $n$ is even $C_p$ is an odd cycle and $C_q$ is an even cycle.
\end{itemize}

Throughout the proofs of next lemmas, we define 
$$f(u,v) = \sqrt{\frac{n_{u}+n_{v}-2} {n_{u} n_{v}}},$$ 
for any edge $uv \in E(G),$ and we write $G[H]$ for the subgraph induced in $G$ by the vertex set of graph $H.$ In Lemmas \ref{lem:b1_1}, \ref{lem:01} and \ref{lem:02} we present the $ABC_{GG}$ for all graphs in $B_1(n).$ Note that for a fixed $n$ some non-isomorphic graphs can be obtained by varying $p$ and $q$ such that $n=p+q-1.$ 

\begin{lemma}\label{lem:b1_1}
Let $G$ $\in$ $B_1(n)$ be a graph on $n=p+q-1$ vertices such that $C_p$ and $C_q$ are odd cycles. Then 
\begin{eqnarray}
ABC_{GG}(G) &=& 2 \, {\left(p - 1\right)} \sqrt{\frac{p + q - 4}{{\left(p - 1\right)} {\left(p +2q -3\right)}}} + \frac{2 \, \sqrt{p - 3}}{p - 1} + \nonumber \\
&+& 2 \, {\left(q - 1\right)} \sqrt{\frac{p + q - 4}{{\left(2 \, p + q - 3\right)} {\left(q - 1\right)}}} 
 + \frac{2 \, \sqrt{q - 3}}{q - 1}.
\label{eq:lemma2}
\end{eqnarray}
\end{lemma}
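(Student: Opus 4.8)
The plan is to exploit the cut-vertex structure of $G$. Write $w$ for the unique vertex shared by $C_p$ and $C_q$, and put $k=(p-1)/2$ and $k'=(q-1)/2$, which are integers since $p,q$ are odd. Because $w$ is a cut vertex, every shortest path between a vertex of $C_q$ and a vertex of $C_p$ passes through $w$; in particular the distances measured inside $G[C_p]$ agree with those in $G$, and likewise for $G[C_q]$. Consequently, for each vertex $z$ of $G[C_q]$ with $z\neq w$ and each edge $uv$ of $G[C_p]$ we have $d(z,u)=d(z,w)+d(w,u)$ and $d(z,v)=d(z,w)+d(w,v)$, where the last two distances are taken inside $C_p$. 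Hence $z$ is closer to $u$, closer to $v$, or equidistant exactly according to whether $d(w,u)<d(w,v)$, $>$, or $=$, independently of the choice of $z$. In other words, the whole set $V(C_q)\setminus\{w\}$ is assigned to a single side of every edge of $C_p$, determined solely by the distances from $w$ inside $C_p$; the symmetric statement holds with $C_p$ and $C_q$ interchanged.

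Next I would record an elementary fact about a single odd cycle: in $C_p$ each edge $uv$ splits $V(C_p)$ into exactly $k$ vertices closer to $u$, $k$ vertices closer to $v$, and one equidistant vertex, namely the vertex antipodal to that edge. In an odd cycle antipodality is a bijection between edges and vertices, so $w$ is the equidistant vertex for precisely one edge of $C_p$, the edge $e_p$ ``opposite'' to $w$ (both of whose endpoints lie at distance $k$ from $w$); define $e_q$ analogously in $C_q$.

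With these two facts the sum over $E(G)$ separates into four groups of edges. For the opposite edge $e_p$ the vertex $w$ is equidistant, so by the first observation every vertex of $V(C_q)\setminus\{w\}$ is equidistant as well; thus $n_u=n_v=k$, and using $2k=p-1$ this edge contributes $f(u,v)=\tfrac{2\sqrt{p-3}}{p-1}$, the second term of \eqref{eq:lemma2}. For each of the remaining $p-1$ edges of $C_p$ the vertex $w$ lies strictly on one side (the near side), so the $k$ cycle-vertices of that side, one of which is $w$, together with all $q-1$ vertices of $V(C_q)\setminus\{w\}$ give $n_{\text{near}}=k+(q-1)$ while $n_{\text{far}}=k$. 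A short simplification with $2k=p-1$ gives $n_u+n_v-2=p+q-4$ and $n_un_v=\tfrac{(p-1)(p+2q-3)}{4}$, hence $f=2\sqrt{\tfrac{p+q-4}{(p-1)(p+2q-3)}}$ for each such edge; summing over the $p-1$ of them yields the first term. The third and fourth terms arise in exactly the same way from the $q$ edges of $C_q$ under the symmetry $p\leftrightarrow q$, $k\leftrightarrow k'$. Adding the four contributions produces \eqref{eq:lemma2}, and as a sanity check the edges counted are $(p-1)+1+(q-1)+1=p+q=n+1$, as required for a bicyclic graph.

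The steps needing the most care are the two structural claims rather than the algebra. First, the $k$/$k$/one-equidistant count for the odd cycle should be proved by an explicit distance computation along the cycle (label the vertices cyclically and compare the two $\min$-distances), together with the verification that $w$ is antipodal to exactly one edge. Second, one must state cleanly that the induced-subgraph distances inside $C_p$ and $C_q$ coincide with the distances in $G$; this is precisely where the cut-vertex property is used, and it is what legitimises treating the two cycles independently apart from their shared vertex $w$. Once these are in place, the remainder is the routine reduction of the radicals to the common value displayed above.
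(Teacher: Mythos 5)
Your proposal is correct and follows essentially the same route as the paper: both partition the edges of each odd cycle into the $p-1$ (resp.\ $q-1$) edges for which the shared vertex $w$ lies strictly on one side, giving $n_u=\frac{p+2q-3}{2}$, $n_v=\frac{p-1}{2}$, plus the single edge antipodal to $w$, giving $n_u=n_v=\frac{p-1}{2}$, and then sum the four resulting contributions. The only difference is presentational: you justify these counts explicitly via the cut-vertex distance decomposition and the $k$/$k$/one-equidistant property of odd cycles, whereas the paper asserts the same values from a labeled figure ``by symmetry.''
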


\begin{proof} Let $G$ be the graph labeled as Figure \ref{fig2}. Let $H_{1}=G[C_p]$ be the graph induced  by vertices $\{v_1,v_2,...,v_p\}$ and consider the edge ($v_1,v_p$) $\in$ $E(H_1)$. Note that $n_{v_1}=\frac{p+2q-3}{2}$ and $n_{v_p}=\frac{p-1}{2}$. Taking the advantage of the symmetry, we can observe that this same situation occurs $p-1$ times which can be written as $n_{v_{i}}=\frac{p+2q-3}{2}$ and $n_{v_{i+1}}=\frac{p-1}{2}$ for $i=\left\{1,\ldots,\frac{p-1}{2}\right\}$. For $i$ $\in$   $\left\{\frac{p+3}{2},\ldots,p-1 \right\}$ $n_{v_{i}}=\frac{p-1}{2}$ and $n_{v_{i+1}}=\frac{p+2q-3}{2}$. The remaining edge $(v_{\frac{p+1}{2}},v_{\frac{p+3}{2}})$ has $n_{v_{\frac{p+1}{2}}}=n_{v_{\frac{p+3}{2}}}=\frac{p-1}{2}$. Thus 
\begin{eqnarray}
\sum_{uv \in E(H_1)}f(u,v)&=&(p-1)\sqrt{\frac{\left(\frac{p-1}{2}\right) + \left(\frac{p+2q-3}{2}\right) -2}{ {\left(\frac{p-1}{2}\right)\left(\frac{p+2q-3}{2} \right)  }} } + \sqrt{\frac{\left(\frac{p-1}{2}\right) + \left(\frac{p-1}{2}\right) -2}{ {\left(\frac{p-1}{2}\right)\left(\frac{p-1}{2} \right)  }} } \\
&=& 2 \, {\left(p - 1\right)} \sqrt{\frac{p + q - 4}{{\left(p - 1\right)} {\left(p +2q -3\right)}}} + \frac{2 \, \sqrt{p - 3}}{p - 1}.
\end{eqnarray}

Now, let $H_2=G[C_q]$ be the graph induced  by vertices $\{v_1, v_{p+1}, \ldots, v_n \}.$ Considering the edge ($v_1,v_n$), we get $n_{v_1} = \frac{2p+q-3}{2}$ and $n_{v_{n}} = \frac{q-1}{2}.$ Analogously to the previous case, $n_{v_{j}}=\frac{2p+q-3}{2}$ and $n_{v_{j+1}}=\frac{q-1}{2}$ for $j=\left\{p+1,\ldots,\frac{2p+q-3}{2}\right\}$. For $j$ $\in$ $\left\{\frac{2p+q+1}{2},\ldots,n \right\}$ then $n_{v_{j}}=\frac{q-1}{2}$ and $n_{v_{j+1}}=\frac{2p+q-3}{2}$. The remaining edge $(u,v) =\left(v_{\frac{2p+q-1}{2}},v_{\frac{2p+q+1}{2}} \right)$ has $n_u=n_v=\frac{q-1}{2}$. Thus,
\begin{eqnarray*}
\sum_{uv \in E(H_2)}f(u,v)&=&(q-1)\sqrt{\frac{\left(\frac{q-1}{2}\right) + \left(\frac{2p+q-3}{2}\right) -2}{ {\left(\frac{q-1}{2}\right)\left(\frac{2p+q-3}{2} \right)  }} } +  \sqrt{\frac{\left(\frac{q-1}{2}\right) + \left(\frac{q-1}{2}\right) -2}{ {\left(\frac{q-1}{2}\right)\left(\frac{q-1}{2} \right)  }} }\\
&=& 2 \, {\left(q - 1\right)} \sqrt{\frac{p + q - 4}{{\left(2 \, p + q - 3\right)} {\left(q - 1\right)}}} + \frac{2 \, \sqrt{q - 3}}{q - 1}.
\end{eqnarray*}

Therefore, 

\begin{eqnarray*}
ABC_{GG}(G) &=& \sum_{uv \in H_1}f(u,v) + \sum_{uv \in H_2}f(u,v) \\
&=& 2 \, {\left(p - 1\right)} \sqrt{\frac{p + q - 4}{{\left(p + 2 \, q - 3\right)} {\left(p - 1\right)}}} + \frac{2 \, \sqrt{p - 3}}{p - 1} \\
&+& 2 \, {\left(q - 1\right)} \sqrt{\frac{p + q - 4}{{\left(2 \, p + q - 3\right)} {\left(q - 1\right)}}} 
 + \frac{2 \, \sqrt{q - 3}}{q - 1},
\end{eqnarray*}
and the result follows.

\begin{figure}[!htb]
    \centering
    \includegraphics[height=3.8cm]{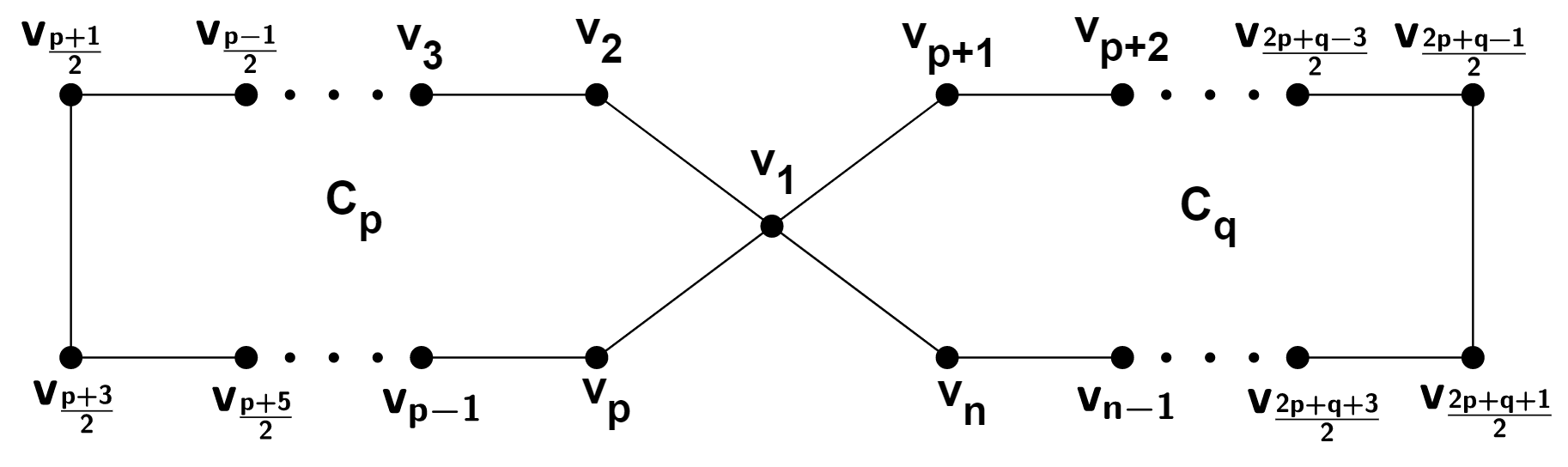}
    \caption{Vertex labeling of a graph $G \, \in \, B_1(n)$ where $C_p$ and $C_q$ are odd cycles}
    \label{fig2}
\end{figure}

\end{proof}

\begin{lemma}
\label{lem:01}
Let $G$ $\in$ $B_1(n)$ be a graph on $\, n = p+q-1\,$ vertices such that $C_p$ and $C_q$ are even cycles. Then 
\begin{eqnarray}\label{eq:lem:01}
ABC_{GG}(G)=  2 \,p \sqrt{\frac{p + q - 3}{p\,{\left(p + 2 \, q - 2\right)}} } + 2 \,q \sqrt{\frac{p + q - 3}{q \, {\left(2 \, p + q - 2\right)} }}
\end{eqnarray}

\end{lemma}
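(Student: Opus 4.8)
The plan is to mirror the structure of the proof of Lemma~\ref{lem:b1_1}, splitting the edge set of $G$ along its two cycles. I would label the vertices so that $v_1$ is the common vertex, let $H_1 = G[C_p]$ be induced by $\{v_1,\dots,v_p\}$ and $H_2 = G[C_q]$ by $\{v_1,v_{p+1},\dots,v_n\}$, and then write $ABC_{GG}(G)=\sum_{uv\in E(H_1)}f(u,v)+\sum_{uv\in E(H_2)}f(u,v)$, since these two edge sets partition $E(G)$.

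The crucial structural difference from the odd case is the following observation, which I would establish first. In an even cycle no vertex is equidistant from the two endpoints of an edge; moreover, for a vertex $w\in C_q\setminus\{v_1\}$ every shortest path to a vertex of $C_p$ factors through $v_1$, so $d(w,u)=d(w,v_1)+d(v_1,u)$, and for an edge $uv$ of $C_p$ the vertex $v_1$ is itself never equidistant from $u$ and $v$. Consequently, for every edge $uv$ of $G$ each vertex is strictly closer to exactly one endpoint, whence $n_u+n_v=n=p+q-1$. In particular there is no ``remaining'' edge with $n_u=n_v$, in contrast with the terms $\tfrac{2\sqrt{p-3}}{p-1}$ and $\tfrac{2\sqrt{q-3}}{q-1}$ appearing in Lemma~\ref{lem:b1_1}; this is precisely why (\ref{eq:lem:01}) consists of only two terms. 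Next I would compute the counts for an arbitrary edge of $C_p$: by the rotational symmetry of the even cycle, every edge splits the $p$ vertices of $C_p$ evenly, $p/2$ on each side, with $v_1$ lying on exactly one of them; since each of the $q-1$ vertices of $C_q\setminus\{v_1\}$ is closer to whichever endpoint is nearer to $v_1$, the endpoint on the $v_1$-side satisfies $n_u=\tfrac{p}{2}+(q-1)=\tfrac{p+2q-2}{2}$ while the other satisfies $n_v=\tfrac{p}{2}$. Substituting into $f$ and simplifying yields $f(u,v)=2\sqrt{\tfrac{p+q-3}{p(p+2q-2)}}$ for every edge of $C_p$, so $\sum_{uv\in E(H_1)}f(u,v)=2p\sqrt{\tfrac{p+q-3}{p(p+2q-2)}}$.

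Running the identical argument on $C_q$, with the roles of $p$ and $q$ and the $p-1$ vertices of $C_p\setminus\{v_1\}$ interchanged, gives $n=\tfrac{2p+q-2}{2}$ and $n=\tfrac{q}{2}$ at the two endpoints of each of the $q$ edges, and hence $\sum_{uv\in E(H_2)}f(u,v)=2q\sqrt{\tfrac{p+q-3}{q(2p+q-2)}}$; adding the two sums produces (\ref{eq:lem:01}). The step requiring the most care is the uniformity claim, namely that every edge of $C_p$ induces the same $p/2$ versus $p/2$ split of the cycle's own vertices so that all $p$ edges contribute the same value of $f$. I would justify this by combining the rotational symmetry of the even cycle with the fact that an even cycle has no edge possessing an equidistant vertex, together with the observation that all distances from $C_q\setminus\{v_1\}$ factor through $v_1$; these facts make $f(u,v)$ independent of which edge of $C_p$ (respectively $C_q$) is chosen, collapsing each sum into a single term scaled by the number of edges.
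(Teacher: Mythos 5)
Your proposal is correct and follows essentially the same route as the paper: the same decomposition of $E(G)$ into the edges of $H_1=G[C_p]$ and $H_2=G[C_q]$, the same counts $n_u=\tfrac{p+2q-2}{2}$, $n_v=\tfrac{p}{2}$ (resp.\ $\tfrac{2p+q-2}{2}$, $\tfrac{q}{2}$) for each of the $p$ (resp.\ $q$) cycle edges, and the same summation yielding (\ref{eq:lem:01}). The only difference is that you make explicit the facts the paper leaves implicit in its appeal to symmetry, namely that even cycles admit no vertex equidistant from the endpoints of an edge and that distances from $C_q\setminus\{v_1\}$ factor through the cut vertex $v_1$, which is why no edge with $n_u=n_v$ arises here, unlike in Lemma~\ref{lem:b1_1}.
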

\begin{proof} Let $G$ be the graph labeled as Figure \ref{fig3}. Let $H_{1} = G[C_p]$ be the graph induced  by vertices $\left\{v_1,v_2,\ldots,v_p \right\}$ and consider the edge ($v_1,v_p$) $\in$ $E(H_1)$. Note that $n_{v_1}=\frac{p+2q-2}{2}$ and $n_{v_p}=\frac{p}{2}$. Taking the advantage of the symmetry, we can observe that this same situation occurs $p$ times which can be written as $n_{v_{i}}=\frac{p+2q-2}{2}$ and $n_{v_{i+1}}=\frac{p}{2}$ for $i$ $\in$ $\left\{1,\ldots,\frac{p}{2} \right\}$. For $i$ $\in$ $\left\{\frac{p}{2},\ldots,p-1 \right\}$, $n_{v_{i}}=\frac{p}{2}$ and $n_{v_{i+1}}=\frac{p+2q-2}{2}$. Thus,
\begin{eqnarray*}
\sum_{uv \in H_1}f(u,v) &=& p\sqrt{\frac{\left(\frac{p}{2}\right) + \left(\frac{p+2q-2}{2}\right) -2}{ {\left(\frac{p}{2}\right)\left(\frac{p+2q-2}{2} \right) }} }\\ 
&=&  p\sqrt{ \frac{\left( \frac{2p+2q-2-4}{2}\right)4}{p \, (p+2q-2)}} = 
2 \,p \sqrt{\frac{p + q - 3}{p \, {\left(p + 2 \, q - 2\right)}} }.
\end{eqnarray*}
Now, let $H_2=G[C_q]$ be the induced graph by vertices $\left\{v_1, v_{p+1}, \ldots, v_n \right\}.$ Considering  edge ($v_1,v_n$), we get $n_{v_1} = \frac{2p+q-2}{2}$ and $n_{v_{n}} = \frac{q}{2}.$ Analgously to the previous case, we have $n_{v_{j}}=\frac{2p+q-2}{2}$ and $n_{v_{j+1}}=\frac{q}{2}$ for $j$ $\in$ $\left \{p+1,\ldots,\frac{2p+q-2}{2} \right\}$. For $j$ $\in$ $\left\{\frac{2p+q}{2},\ldots,n-1 \right\}$, we obtain  $n_{v_{j}}=\frac{q}{2}$ and $n_{v_{j+1}}=\frac{2p+q-2}{2}.$ Thus,
\begin{equation*}
\sum_{uv \in E(H_2)}f(u,v)=q\sqrt{\frac{\left(\frac{q}{2}\right) + \left(\frac{q+2p-2}{2}\right) -2}{ {\left(\frac{q}{2}\right)\left(\frac{q+2p-2}{2} \right)  }} } = q\sqrt{ \frac{\left( \frac{2p+2q-2-4}{2}\right)4}{q \, (p+2q-2)}} = 2 \,q \sqrt{\frac{p + q - 3}{{q \, }{\left(2 \, p + q - 2\right)} }}.
\end{equation*}

\begin{figure}[!htb]
    \centering
    \includegraphics[height=4.0cm]{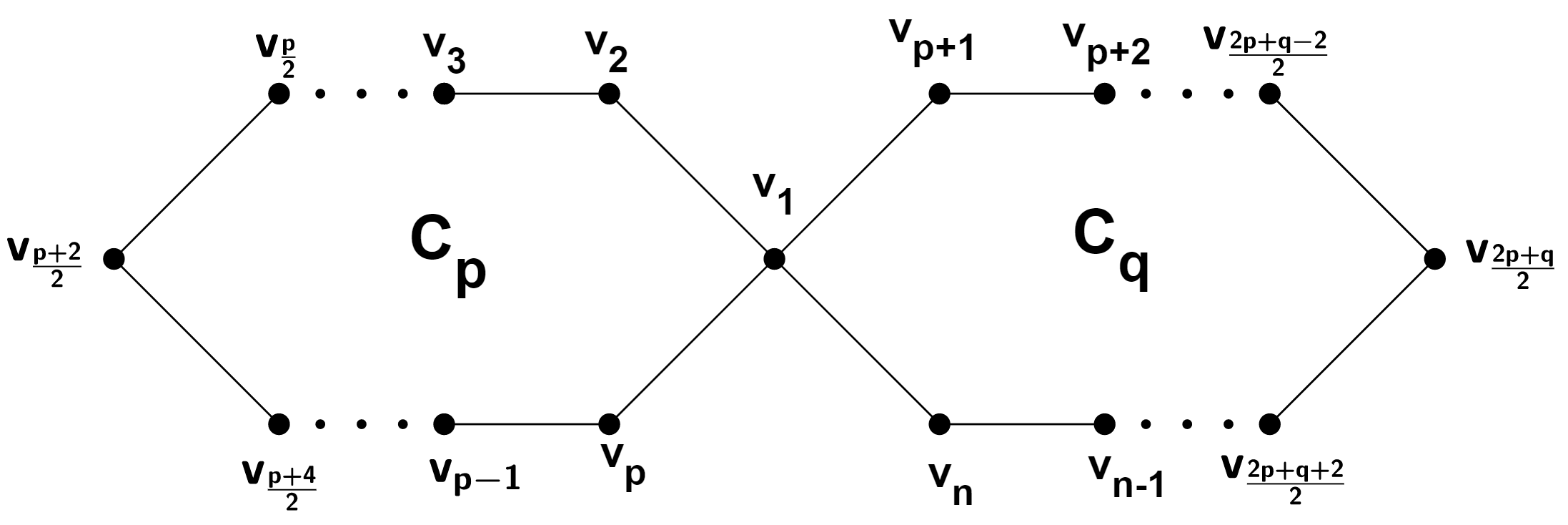}
    \caption{Vertex labeling of a graph $G \, \in \, B_1(n)$ where $C_p$ and $C_q$ are even cycles}
    \label{fig3}
\end{figure}
Therefore, 
\begin{eqnarray*}
ABC_{GG}(G)= \sum_{uv \in H_1}f(u,v) + \sum_{uv \in H_2}f(u,v) = 
2 \,p \sqrt{\frac{p + q - 3}{p\,{\left(p + 2 \, q - 2\right)}} } + 2 \,q \sqrt{\frac{p + q - 3}{{q\, }{\left(2 \, p + q - 2\right)} }}, 
\end{eqnarray*}
and the result follows. 
\end{proof}

\begin{lemma}
\label{lem:02}
Let $G$ $\in$ $B_1(n)$ be a graph on $\,n = p+q-1\,$ vertices such that $C_p$ is an odd cycle and $C_q$ is an even cycle. Then 
\begin{eqnarray}\label{eq:lem:02}
ABC_{GG}(G)= \frac{2 \, \sqrt{p - 3}}{p - 1} + 2 \, {\left(p - 1\right)} \sqrt{\frac{p + q - 4}{{\left(p + 2 \, q - 3\right)} {\left(p - 1\right)}}} + 2 \, q \sqrt{\frac{p + q - 3}{q\, {\left(2 \, p + q - 2\right)} }}.
\end{eqnarray}
\end{lemma}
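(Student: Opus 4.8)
The plan is to reuse, essentially unchanged, the edge-decomposition of Lemmas \ref{lem:b1_1} and \ref{lem:01}. I would label $G$ so that $v_1$ is the vertex identified between $C_p$ and $C_q$ (analogously to Figures \ref{fig2} and \ref{fig3}), and set $H_1 = G[C_p]$, $H_2 = G[C_q]$, so that $E(G)=E(H_1)\cup E(H_2)$ and
$$ABC_{GG}(G)=\sum_{uv\in E(H_1)}f(u,v)+\sum_{uv\in E(H_2)}f(u,v).$$
The guiding observation is that, for an edge $uv$ lying in one of the two cycles, every one of the $n-1$ vertices outside that cycle is reached through the shared vertex $v_1$, so such a vertex is counted on whichever side contains $v_1$. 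Hence the contribution of each cycle depends only on its own parity together with the number of vertices the \emph{other} cycle attaches at $v_1$.

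For the odd cycle $H_1=G[C_p]$, the cycle $C_q$ attaches exactly $q-1$ vertices at $v_1$, which is the same number as in the odd–odd case of Lemma \ref{lem:b1_1}, regardless of the parity of $q$. Therefore the distance counts $n_{v_i},n_{v_{i+1}}$ for the edges of $C_p$ are verbatim those of Lemma \ref{lem:b1_1}: by symmetry $p-1$ edges give $n_{v_i}=\frac{p+2q-3}{2}$ and $n_{v_{i+1}}=\frac{p-1}{2}$, while the single antipodal edge $\bigl(v_{\frac{p+1}{2}},v_{\frac{p+3}{2}}\bigr)$ has $n_u=n_v=\frac{p-1}{2}$. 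Summing reproduces the first two terms $2(p-1)\sqrt{\frac{p+q-4}{(p+2q-3)(p-1)}}+\frac{2\sqrt{p-3}}{p-1}$.

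For the even cycle $H_2=G[C_q]$, the cycle $C_p$ attaches $p-1$ vertices at $v_1$, the same number as in the even–even case of Lemma \ref{lem:01}. Since an even cycle has no equidistant vertex for any of its edges, each of the $q$ edges splits its own $q$ vertices as $\frac{q}{2}:\frac{q}{2}$, and adding the $p-1$ external vertices to the $v_1$-side yields $n_u=\frac{2p+q-2}{2}$, $n_v=\frac{q}{2}$ for every edge. This is precisely the situation of Lemma \ref{lem:01}, so $\sum_{uv\in E(H_2)}f(u,v)=2q\sqrt{\frac{p+q-3}{q(2p+q-2)}}$, the third term. Adding the two contributions gives \eqref{eq:lem:02}.

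The one point requiring care — and what I would verify first — is the behaviour of an external vertex $w$ relative to an edge $(v_i,v_{i+1})$ of a given cycle. Every shortest path from $w$ to either endpoint passes through $v_1$, so $d(w,v_i)-d(w,v_{i+1})=d(v_1,v_i)-d(v_1,v_{i+1})$. In the even cycle these two cycle-distances differ by exactly $1$, so $w$ strictly favours the $v_1$-side of every edge; this is why all $q$ edges of $C_q$ carry the full external count. In the odd cycle the same holds for every edge except the unique antipodal edge $\bigl(v_{\frac{p+1}{2}},v_{\frac{p+3}{2}}\bigr)$, for which $v_1$ is equidistant from both endpoints; there every external vertex is equidistant as well and is therefore excluded, which is exactly what produces the isolated term $\frac{2\sqrt{p-3}}{p-1}$. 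Once this is confirmed, the two cycle contributions coincide with the corresponding pieces of Lemmas \ref{lem:b1_1} and \ref{lem:01}, and no genuinely new calculation is needed.
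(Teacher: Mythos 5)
Your proposal is correct and follows essentially the same route as the paper: the same decomposition $E(G)=E(H_1)\cup E(H_2)$ with the identical distance counts (the $p-1$ symmetric edges plus the antipodal edge $\bigl(v_{\frac{p+1}{2}},v_{\frac{p+3}{2}}\bigr)$ with $n_u=n_v=\frac{p-1}{2}$ in $C_p$, and the uniform split $n_u=\frac{2p+q-2}{2}$, $n_v=\frac{q}{2}$ over all $q$ edges of $C_q$). Your explicit verification that an external vertex $w$ satisfies $d(w,v_i)-d(w,v_{i+1})=d(v_1,v_i)-d(v_1,v_{i+1})$ — hence is equidistant precisely on the antipodal edge of the odd cycle — is a point the paper leaves implicit under ``symmetry,'' but it does not change the argument.
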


\begin{proof} 

Let $G$ be the graph labeled as Figure \ref{fig4}. Let $H_{1}=G[C_p]$ be the graph induced by vertices $\{v_1,v_2,\ldots,v_p\}$ and consider the edge ($v_1,v_p$) $\in$ $E(H_1)$. Note that $n_{v_1}=\frac{p+2q-3}{2}$ and $n_{v_p}=\frac{p-1}{2}.$ Taking the advantage of the symmetry, we can observe that this same situation occurs $p-1$ times wich can be written as $n_{v_{i}}=\frac{p+2q-3}{2}$ and $n_{v_{i+1}}=\frac{p-1}{2}$ for $i$ $\in$ $\{1,\ldots,\frac{p-1}{2}\}$. For $i$ $\in$   $\left\{\frac{p+3}{2},\ldots,p-1 \right\}$, then $n_{v_{i}}=\frac{p-1}{2}$ and $n_{v_{i+1}}=\frac{p+2q-3}{2}$. The remaining edge $\left(v_{\frac{p+1}{2}},v_{\frac{p+3}{2}}\right)$ has $n_{v_{\frac{p+1}{2}}}=n_{v_{\frac{p+3}{2}}}=\frac{p-1}{2}$. Thus,
\begin{eqnarray*}
\sum_{uv \in H_1}f(u,v) & = & (p-1)\sqrt{\frac{\left(\frac{p-1}{2}\right) + \left(\frac{p+2q-3}{2}\right) -2}{ {\left(\frac{p-1}{2}\right)\left(\frac{p+2q-3}{2} \right)  }} } +  
\sqrt{\frac{\left(\frac{p-1}{2}\right) + \left(\frac{p-1}{2}\right) -2}{ {\left(\frac{p-1}{2}\right)\left(\frac{p-1}{2} \right)  }} }\\
&=&  2 \, {\left(p - 1\right)} \sqrt{\frac{p + q - 4}{{\left(p - 1\right)} {\left(p +2q -3\right)}}} + \frac{2 \, \sqrt{p - 3}}{p - 1}.
\end{eqnarray*}

Now, let $H_2=G[C_q]$ be the graph induced  by vertices $\{v_1, v_{p+1}, \ldots, v_n \}.$ Considering the edge ($v_1,v_n$), we get $n_{v_1} = \frac{2p+q-2}{2}$ and $n_{v_{n}} = \frac{q}{2}.$ Analgously to the previous case, $n_{v_{j}}=\frac{2p+q-2}{2}$ and $n_{v_{j+1}}=\frac{q}{2}$ for $j$ $\in$ $\left\{p+1,\ldots,\frac{2p+q-2}{2}\right\}$. For $j$ $\in$ $\left\{\frac{2p+q}{2},\ldots,n-1 \right\}$ then $n_{v_{j}}=\frac{q}{2}$ and $n_{v_{j+1}}=\frac{2p+q-2}{2}$. Thus,

\begin{eqnarray*}
\sum_{uv \in E(H_2)} f(u,v) &=& q\sqrt{\frac{\left(\frac{q}{2}\right) + \left(\frac{2p+q-2}{2}\right) -2}{ {\left(\frac{q}{2}\right)\left(\frac{2p+q-2}{2} \right)  }} }\\
&=& 2 \, q \sqrt{\frac{p + q - 3}{q \, {\left(2 \, p + q - 2\right)} }}.
\end{eqnarray*}

Therefore, 

\begin{eqnarray*}
\begin{array}{c}
ABC_{GG}(G)= \sum_{uv \in H_1}f(u,v) + \sum_{uv \in H_2}f(u,v) = 

2 \, {\left(p - 1\right)} \sqrt{\frac{p + q - 4}{{\left(p + 2 \, q - 3\right)} {\left(p - 1\right)}}} + \frac{2 \, \sqrt{p - 3}}{p - 1} + \\ \\ + 2 \, q \sqrt{\frac{p + q - 3}{ q \, \left(2 \, p + q - 2\right)}},
 
\end{array}
\end{eqnarray*}

\noindent and the result follows. 

\begin{figure}[h!]
    \centering
    \includegraphics[height=4cm]{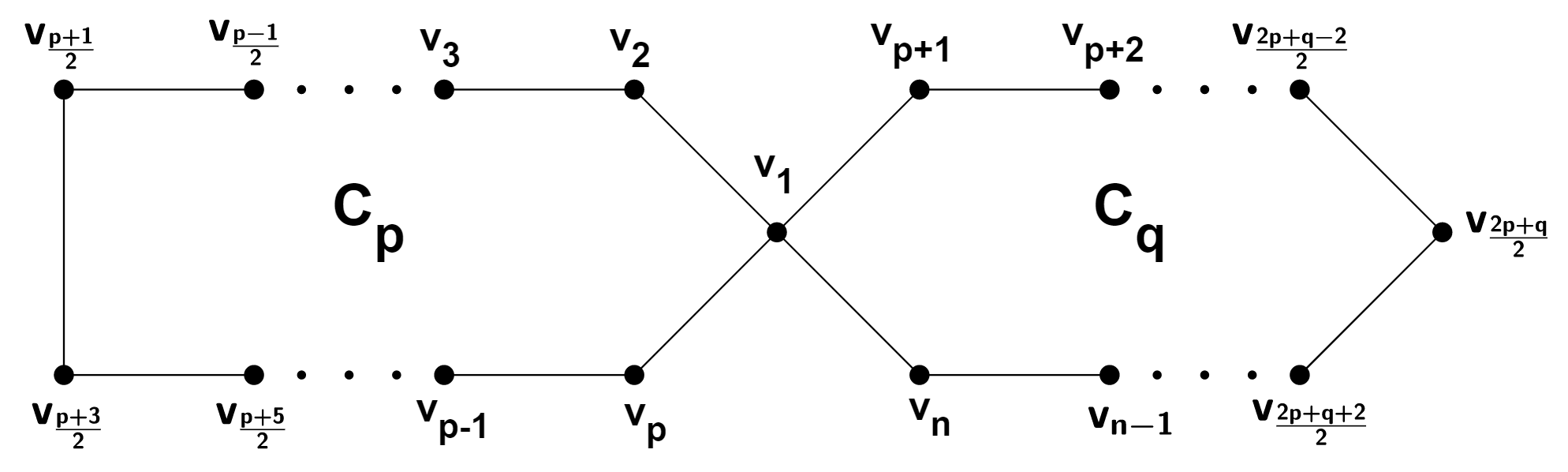}
    \caption{Vertex labeling of a graph $G \, \in \, B_1(n)$ where $C_p$ is an odd cycle and $C_q$ is an even cycle}
    \label{fig4}
\end{figure}

\end{proof}

\section{Minimizing $ABC_{GG}(G)$ for all $G \in B_1(n)$}

In order to characterize the graphs with  minimal Graovac-Ghorbani index in $B_{1}(n),$ we performed  computational experiments to bicyclic graphs with no pendant vertices up to 16  vertices. These graphs were generated by using Nauty-Traces package \cite{McKay2014} and the $ABC_{GG}$ indexes were computed in BlueJ software  \cite{Dragan2019, Barnes2016}. 

\subsection{When $n$ is odd}

\vspace{0.4cm}

We first consider the case when both cycles has odd lenght. Since $n \geq 9$ and $n$ is odd, we have that $n=2k-1$, where  $k \geq 5$ and $k \in \mathbb{N}.$ 

\vspace{0.4cm}

\noindent \textbf{Case $(i):$ $C_p$ and $C_q$ are odd cycles.}

\vspace{0.4cm}

From the symmetry of the process of removing vertices from cycle $C_p$ and adding them to $C_q$, we get that: if $k$ is odd, $3 \leq p \leq \frac{n+1}{2}$ and $\frac{n+1}{2} \leq q \leq n-2;$ if $k$ is even, $3 \leq p \leq \frac{n-1}{2}$ and $\frac{n+3}{2} \leq q \leq n-2.$ From these inequalities, we obtain that: if $k$ is odd, then  $3 \leq p \leq k$ and $k \leq q \leq 2k-3$; if $k$ is even, then  $3 \leq p \leq k-1$ and $k+1 \leq q \leq 2k-3.$

Let $x \in \mathbb{N}$ be the number of vertices removed from $C_q$ and added to $C_p.$ Note that in this process, $x$ should be even to keep both cycles of odd length and that $p=3+x$ and $q=2k-3-x.$ The following facts are true:
\begin{enumerate}
\item[(F1) ] If $k \geq 5$ is odd, then $0 \leq  x \leq k-3;$ 
\item[(F2) ] If $k \geq 5$ is even, then $0 \leq  x \leq k-4.$ 
\end{enumerate}

Let $G^1(p,q)$ be a graph of order $n$  belonging to $B_{1}(p,q)$ with fixed $p$ and $q$ such that $n=p+q-1.$ Our proof initially considers the graph with $p=3$ and $q=n-2,$ which we denote by  $G^1_{0}(3,n-2).$ Removing two vertices from cycle $C_{n-2}$ of the graph $G^1_{0}(3,n-2)$ and adding them to $C_3$, we obtain the graph 
$G^{1}_{1}(5,n-4).$ If $n$ and $k$ are odd, we prove that $G^1_{0}(3,n-2)$ has minimal $ABC_{GG}$ among all graphs $G^1_{x}(3+x,n-2-x),$ where $0 \leq x \leq (n-5)/2.$ If $n$ is odd and $k$ is even, we prove that $G^1_{0}(3,n-2)$ has minimal $ABC_{GG}$ among all graphs $G^1_{x}(3+x,n-2-x),$ where $0 \leq x \leq (n-7)/2.$ 

For instance, for $n=17$ we have $k=9$ and all graphs in $B_1(17)$ are: $G^1_0(3,15),$ $G^1_2(5,13),$ $G^1_4(7,11),$ $G^1_6(9,9).$ In the other hand, if $n=19$, we have $k=10$ and all graphs in $B_1(19)$ are: $G^1_0(3,17),$ $G^1_2(5,15),$ $G^1_4(7,13),$ $G^1_6(9,11).$ In both cases, we will prove in the next lemmas that graph  $G^1_{0}(3,n-2)$ has minimal $ABC_{GG}.$

Recall Equation (\ref{eq:lemma2}) of Lemma \ref{lem:b1_1} and rewrite it as:
\begin{eqnarray}
f_{1}(x):&=& ABC_{GG}(G)= 2 {\left(x + 2\right)} \sqrt{\frac{2k - 4}{{\left(4k - x - 6\right)}{\left(x +2\right)}}}+ \nonumber \\
&+& 2 \, {\left(2 \, k - x - 4\right)} \sqrt{\frac{2k - 4}{{\left(2k +x \right)}{\left(2 \, k - x - 4\right)}}}  + \frac{2 \, \sqrt{2 \, k - x - 6}}{2 \, k - x - 4} + \frac{2 \, \sqrt{x}}{x + 2}. \label{ff1}
\end{eqnarray}

Next, we prove that the first three terms of $f_1(x)$ is an increasing function in $x.$
\begin{comment}
\textcolor{red}{We compute and plot the graphs of this function considering $n=21$ and $n=23$, according to the table \ref{t1} below. We can verify that the function is non-monotonic. Besides, we note that $f(k,x)$ is decided by the value of $x$ for all $k \geq 5$. }
\begin{table}[h]
\begin{center}
\begin{tabular}{|c|c|c|c||c|c|c|c|} \hline
\multicolumn{4}{|c||}{{\bf n=21 (k=11) }} & \multicolumn{4}{|c|}{{\bf n=23 (k=12)}} \\
\hline
\textbf{p} & \textbf{q} & x & \textbf{f(3+x,2k-3-x)} & p & q & x &  \textbf{f(3+x,2k-3-x)} \\ 
\hline
3 &  19 & 0 & 10,06632729 & 3 & 21 & 0 & 10,5410300239\\  
\hline
5 & 17 & 2 & 10,93144431 & 5 & 19 & 2 & 11,4220624257\\ 
\hline
7 & 15 & 4 & 10,95256374 & 7 & 17 & 4 & 11,4497011444\\
\hline
9 & 13 & 6 & 10,93698 & 9 & 15 & 6 & 11,4337152352\\ 
\hline
11 &  11 & 8 & 10,92932982 & 11 & 13 & 8 & 11,4206697774\\ 
\hline
\end{tabular}
\caption{Values of the $f(3+x,2k-3-x)$ for $n=21$ ($k=11$) and $n=23$ ($k=12$).}
\label{t1}
\end{center}
\end{table}

\begin{figure}[h]
    \centering
    \includegraphics[height=6.0cm]{}
    \caption{Function $f(k,x)$ for $n=21$ ($k=11$) and $n=23$ ($k=12$), respectively.}
    \label{f_11_12}
\end{figure}
\end{comment}
%
%
\begin{lemma}
\label{l_1}
Let $n=2k-1$ such that $k \geq 5$ and let $x \geq 0.$ Then, the function  
\begin{eqnarray*}
g_{1}(x) &=& 2 {\left(x + 2\right)} \sqrt{\frac{2k - 4}{{\left(4k - x - 6\right)}{\left(x +2\right)}}} + 2 \, {\left(2 \, k - x - 4\right)} \sqrt{\frac{2k - 4}{{\left(2k +x \right)}{\left(2 \, k - x - 4\right)}}} \nonumber \\
&+& \frac{2 \, \sqrt{2 \, k - x - 6}}{2 \, k - x - 4}
\end{eqnarray*}
is increasing in $x.$
\end{lemma}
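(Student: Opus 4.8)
The plan is to write $g_1 = T_1 + T_2 + T_3$ with each term in reduced form, differentiate, and control the sign of $g_1'$ on the parameter interval actually produced by the construction. Using $2(x+2)\sqrt{\frac{2k-4}{(4k-x-6)(x+2)}} = 2\sqrt{2k-4}\,\sqrt{\frac{x+2}{4k-x-6}}$ and the analogous simplification of the second term, I would set
$$T_1(x) = 2\sqrt{2k-4}\,\sqrt{\frac{x+2}{4k-x-6}},\qquad T_2(x) = 2\sqrt{2k-4}\,\sqrt{\frac{2k-x-4}{2k+x}},\qquad T_3(x) = \frac{2\sqrt{2k-x-6}}{2k-x-4}.$$
Differentiating gives $(T_1+T_2)'(x) = C\big[(4k-x-6)^{-3/2}(x+2)^{-1/2} - (2k+x)^{-3/2}(2k-x-4)^{-1/2}\big]$ with the positive constant $C = 2(2k-2)\sqrt{2k-4}$, and a short quotient-rule computation gives $T_3'(x) = \dfrac{2k-x-8}{\sqrt{2k-x-6}\,(2k-x-4)^2}$.

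The decisive step is the sign of $(T_1+T_2)'$. Put $u = x+2$ and $w = 2k-x-4$, so that $u+w = 2k-2$, $4k-x-6 = u+2w$ and $2k+x = 2u+w$. Then the bracket is positive precisely when $(2u+w)^3 w > (u+2w)^3 u$; dividing by $w^4$ and setting $r = u/w$ this reads $(2r+1)^3 > r(r+2)^3$. The clean fact that makes everything work is the factorization
$$(2r+1)^3 - r(r+2)^3 = -(r-1)^3(r+1),$$
so, since $r+1 > 0$, the inequality holds iff $(r-1)^3 < 0$, i.e. iff $r < 1$, i.e. iff $u < w$, i.e. iff $x < k-3$. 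Hence $T_1+T_2$ is strictly increasing for $0 \le x < k-3$. Similarly $T_3'(x) > 0$ iff $x < 2k-8$, and because $k \ge 5$ gives $k-3 \le 2k-8$, we have $T_3'(x) \ge 0$ throughout $[0,k-3]$. Adding the two contributions yields $g_1'(x) > 0$ for $0 \le x < k-3$, which is the asserted monotonicity.

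Finally I would make the range of $x$ explicit, since this is where the statement must be read with care. The graphs $G^1_x(3+x,\,2k-3-x)$ require both cycles to be odd of length at least $3$, and the construction is symmetric under $x \mapsto 2k-6-x$, which merely swaps the roles of $C_p$ and $C_q$ and so produces an isomorphic graph; this is exactly what facts (F1)--(F2) encode, so the nonredundant parameter range is $0 \le x \le k-3$. The computation above establishes monotonicity on this interval. I expect the genuine obstacle to be the sign analysis of $(T_1+T_2)'$: the summands $T_1$ and $T_2$ move in opposite directions, so no term-by-term comparison is available, and one really needs the factorization $(2r+1)^3 - r(r+2)^3 = -(r-1)^3(r+1)$ to locate the sign change at the symmetry point $x=k-3$. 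I would also flag honestly that this same analysis shows $(T_1+T_2)'<0$ for $x>k-3$ (and $T_3'\to -\infty$ as $x\to (2k-6)^-$), so $g_1$ is \emph{not} increasing on the whole interval $[0,2k-6]$ on which it is defined; the reduction to $0 \le x \le k-3$ via (F1)--(F2) and the symmetry is therefore essential to the statement rather than a mere convenience, and I would phrase the lemma accordingly.
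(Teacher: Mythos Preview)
Your argument is correct and matches the paper's approach: both split $g_1$ into the third term and the sum of the first two, compute derivatives, and show each piece is nondecreasing on the range $0\le x\le k-3$ supplied by (F1)--(F2), with the key step being a factorization that pins down the sign of $(T_1+T_2)'$. The paper obtains this factorization by brute-force expansion as $d_2(x)^2-d_1(x)^2=16(k-1)(k-x-3)^3$, which is exactly your identity $(2r+1)^3-r(r+2)^3=-(r-1)^3(r+1)$ after the substitution $r=(x+2)/(2k-x-4)$; your route to it is tidier, and your remark that monotonicity genuinely fails for $x>k-3$ (so the restriction via (F1)--(F2) is essential, not cosmetic) is a correct point the paper leaves implicit.
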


\begin{proof} We will use Facts (F1) and (F2) to prove our result. We split $g_1(x)$ into two cases: 

\vspace{0.4cm}

\noindent {\bf Case 1.} Let $h(x)= \frac{2 \, \sqrt{2 \, k - x - 6}}{2 \, k - x - 4}.$ We have that $h^{\prime}(x)$ is given by 
$$h^{\prime}(x) = \frac{2 \, k - x - 8}{{\left(2 \, k - x - 4\right)}^{2} \sqrt{2 \, k - x - 6}}.
$$ 

\vspace{0.4cm}

\noindent Assume that $k \geq 5$ is odd. From (F1), we have $2\, k - x - 8 \geq 0$ and $2 \, k - x - 6 \geq 0$.  Now, assume that $k \geq 5$ is even. From (F2), we have $2\, k - x - 8 \geq 0$ and $2 \, k - x - 6 \geq 0$. Then $h^{\prime}(x) > 0$, which means that $h(x)$ is increasing in $x$ for all $k\geq 5$. 
\begin{comment}

We plot a example for $h(x)$, being $k=33$ ($n=65$) (see Figure \ref{f_33}). 

\begin{figure}[h]
    \centering
    \includegraphics[height=7.0cm]{}
    \caption{Function $h(k,x)$ for $k=33$ ($n=65$)}
    \label{f_33}
\end{figure}
%\begin{figure}[h]
%    \centering
%    \includegraphics[height=7.0cm]{f_34.jpg}
%    \caption{Function $h(k,x)$ for $k=34$ ($n=67$)}
%    \label{f_34}
%\end{figure}
\end{comment}

\vspace{0.4cm}

\noindent {\bf Case 2.} Let $m(x) = 2 {\left(x + 2\right)} \sqrt{\frac{2k - 4}{{\left(4k - x - 6\right)}{\left(x +2\right)}}} + 2 \, {\left(2 \, k - x - 4\right)} \sqrt{\frac{2k - 4}{{\left(2k +x \right)}{\left(2 \, k - x - 4\right)}}}$. We have that $m^{\prime}(x)$ is given by \begin{eqnarray*}
m^{\prime}(x) &=& \frac{ 8{\left(k - 1\right)} {\left(k - 2\right)}}{ {\left(x + 2\right)} {\left(4 \, k - x - 6\right)}^{2} \sqrt{\frac{2k - 4}{{\left(x+2\right)} {\left(4 \, k - x - 6\right)}}}} 
- \frac{ 8{\left(k - 1\right)} {\left(k - 2\right)}}{ {\left(2k-x -4\right)} {\left(2k +x\right)}^{2} \sqrt{\frac{2k - 4}{{\left(2k-x-4\right)} {\left(2k +x\right)}}}} \\ 
     &=& \frac{ 8{\left(k - 1\right)} {\left(k - 2\right)}}{\sqrt{(2k-4)}\left( { \frac{ {\left(x + 2\right)} {\left(4 \, k - x - 6\right)}^{2}}{ \sqrt{ {{\left(x+2\right)} {\left(4 \, k - x - 6\right)}}}}  }\right)} 
- \frac{ 8{\left(k - 1\right)} {\left(k - 2\right)}}{\sqrt{(2k-4)}\left({\frac{\left(2k-x -4\right){\left(2k+x\right)}^{2}}{\sqrt{{{\left(2k-x-4\right)} {\left(2k +x\right)}}}} }\right)} 
\end{eqnarray*}

\vspace{0.4cm}

\noindent Define $d_1(x)$ and $d_2(x)$ as

$$d_1(x) = \frac{ {\left(x + 2\right)} {\left(4 \, k - x - 6\right)}^{2}}{ \sqrt{ {{\left(x+2\right)} {\left(4 \, k - x - 6\right)}}}} = (4k-x-6)\sqrt{(4k-x-6)(x+2)},$$ 

$$d_2(x) = \frac{\left(2k-x -4\right){\left(2k +x\right)}^{2}}{\sqrt{{{\left(2k-x-4\right)} {\left(2k +x\right)}}}} = (2k+x)\sqrt{(2k+x)(2k-x-4)}.$$

\vspace{0.4cm}

\noindent If $d_2(x) \geq d_1(x),$ then $m^{\prime}(x) \geq 0$ and $m(x)$ is increasing in $x.$ Let $t(x)=d_2(x)^2 - d_1(x)^2$. By algebraic manipulations, we have that 
\begin{eqnarray*}
t(x) &=& d_2(x)^2 - d_1(x)^2 \\
       &=& (2k +x)^3(2k-x-4) - (x+2)(4k - x - 6)^3 \\ 
       &=& 16 \, k^{4} + 16 \, k^{3} x - 4 \, k x^{3} - x^{4} - 32 \, k^{3} - 48 \, k^{2} x - 24 \, k x^{2} - \\
       && - 4 \, x^{3} \\
       && -(64 \, k^{3} x - 48 \, k^{2}x^{2} + 12 \, k x^{3} - x^{4} + 128 \, k^{3} - 384 \, k^{2} x +\\
       && +168 \, k x^{2} - 20 \, x^{3} - 576 \, k^{2} + 720 \, k x - 144 \, x^{2} + 864 \, k -\\
       && - 432 \, x - 432) \\   
       &=& 16 \, k^{4} - 48 \, k^{3} x + 48 \, k^{2} x^{2} - 16 \, k x^{3} - 160 \, k^{3} + 336 \, k^{2} x - \\ 
       && - 192 \, k x^{2} + 16 \, x^{3} + 576 \, k^{2} - 720 \, kx  + \\
     && + 144 \, x^{2} - 864 \, k + 432 \, x + 432  \\  
t(x) &=& 16(k-1)(k-x-3)^3 .
\end{eqnarray*}

\vspace{0.7cm}

\noindent Note that  $x=k-3$ is the root of $t(x).$ Using Facts (F1) and F(4), we get $t(x) > 0$.  We have that $t(x) = d_2(x)^2 - d_1(x)^2 =(d_2(x)-d_1(x))(d_2(x)+d_1(x)) \geq 0$, then $d_2(x) - d_1(x) \geq 0$. Thus, $m^{\prime}(x) > 0$, which means that $m(x)$ is increasing in $x$ for all $k \geq 5$.

%We plot an example for $k=33$ $(n=65)$ (see Figure \ref{f_33_2}).   

\noindent Therefore, from Cases $1$ and $2$, we have that $g_{1}^{\prime}(x) > 0$, and so $g_1(x)$ is increasing in $x$ for all $k \geq 5$.    
\end{proof}

\begin{comment}
\begin{figure}[h]
    \centering
    \includegraphics[height=7.0cm]{f_33_2.jpg}
    \caption{Function $m(k,x)$ for $k=33$ ($n=65$).}
    \label{f_33_2}
\end{figure}
%
\begin{figure}[h]
    \centering
    \includegraphics[height=7.0cm]{f_33_3.jpg}
    \caption{Function $f(k,x)$ for $k=33$ ($n=65$).}
    \label{f_33_3}
\end{figure}
\end{comment}

%%% PROVA QUE g(k,0) é o ponto mínimo) %%%%%%%%%%%

%\begin{table}[!h]
%\begin{center}
%\begin{tabular}{|c|c|c|c||c|c|c|c|} \hline
%\multicolumn{4}{|c||}{{\bf n=25 (k=13) }} & %\multicolumn{4}{|c|}{{\bf n=27 (k=14)}} \\
%\hline
%\textbf{p} & \textbf{q} & x & \textbf{f(3+x,2k-3-x)} & p %& q & x &  \textbf{f(3+x,2k-3-x)} \\ 
%\hline
%3 &  23 & 0 & 10,9917063479 & 3 & 25 & 0 & 11,4216071264 %\\  
%\hline
%5 & 21 & 2 & 11,8880476464 & 5 & 23 & 2 & %12,3325625054\\ 
%\hline
%7 & 19 & 4 & 11,9230930027 & 7 & 21 & 4 & %12,3755043755\\
%\hline
%9 & 17 & 6 & 11,9085645824 & 9 & 19 & 6 & %12,3637429325\\ 
%\hline
%11 & 15 & 8 & 11,8923980246 & 11 & 17 & 8 & %12,3462796831\\ 
%\hline
%13 &  13 & 10 & 11,8861437596 & 13 & 15 & 10 & %12,3356166154\\ 
%\hline
%\end{tabular}
%\caption{Values of the $f(3+x,2k-3-x)$ for $n=25$ %($k=13$) and $n=27$ ($k=14$)}
%\label{t2}
%\end{center}
%\end{table}

%\begin{figure}[!h]
%    \centering
%    \includegraphics[height=6.0cm]{f_13_14.jpg}
%    \caption{Function $f(3+x,2k-3-x)$ for $n=25$ %($k=13$) and $n=2$ ($k=14$), respectively.}
%    \label{f_13_14}
%\end{figure}

\begin{lemma}
\label{l_2}
Let $n=2k-1$ such that $k \geq 5.$ Let  $x \geq 0$ and  
\begin{eqnarray*}
f_1(x) &=& 2 {\left(x + 2\right)} \sqrt{\frac{2k - 4}{{\left(4k - x - 6\right)}{\left(x +2\right)}}} + 2 \, {\left(2 \, k - x - 4\right)} \sqrt{\frac{2k - 4}{{\left(2k +x \right)}{\left(2 \, k - x - 4\right)}}} \nonumber \\
 &+& \frac{2 \, \sqrt{2 \, k - x - 6}}{2 \, k - x - 4} + \frac{2 \, \sqrt{x}}{x + 2}. 
\end{eqnarray*}
Then, we have that $f_1(0)$ is the minimal value of  $f_1(x)$.
\end{lemma}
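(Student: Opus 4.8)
The plan is to leverage Lemma \ref{l_1} directly, since it already does essentially all of the work. Observe first that $f_1(x)$ decomposes cleanly as
\begin{equation*}
f_1(x) = g_1(x) + \frac{2\sqrt{x}}{x+2},
\end{equation*}
where $g_1(x)$ is exactly the function shown to be increasing in $x$ by Lemma \ref{l_1}. So the strategy is to bound the two summands separately and combine them, rather than differentiate $f_1$ directly (which would be awkward because the final term is non-monotonic).

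For the first summand, I would invoke Lemma \ref{l_1}: since $g_1$ is increasing on $x \geq 0$, we have $g_1(x) \geq g_1(0)$ for every admissible $x$. For the second summand, I would note the elementary fact that $\frac{2\sqrt{x}}{x+2} \geq 0$ for all $x \geq 0$, with equality precisely when $x = 0$, because the numerator $2\sqrt{x}$ is nonnegative and the denominator $x+2$ is strictly positive. These two observations are all that is needed.

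Putting them together, for any $x \geq 0$ one obtains
\begin{equation*}
f_1(x) = g_1(x) + \frac{2\sqrt{x}}{x+2} \;\geq\; g_1(0) + 0 \;=\; f_1(0),
\end{equation*}
where the final equality holds because the term $\frac{2\sqrt{x}}{x+2}$ vanishes at $x = 0$. Hence $f_1(0)$ is the minimal value of $f_1(x)$, as claimed; moreover the inequality is strict for every $x > 0$, so the minimizer is unique.

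I do not expect any genuine obstacle here: the substantive analytic difficulty — establishing monotonicity of the three-term expression $g_1$, which required the sign analysis of $m'(x)$ and the factorization $t(x) = 16(k-1)(k-x-3)^3$ — has already been discharged in Lemma \ref{l_1}. The only subtlety worth flagging is that the extra term $\frac{2\sqrt{x}}{x+2}$ is itself \emph{not} monotone (it rises then falls, peaking near $x=2$), so one must resist the temptation to argue that $f_1$ is increasing; the correct and sufficient point is merely that this term is nonnegative and zero at the left endpoint, which is why appending it to the increasing function $g_1$ cannot move the location of the minimum away from $x=0$.
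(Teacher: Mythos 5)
Your proposal is correct and is essentially the paper's own proof: the authors likewise write $f_1(x) = g_1(x) + \frac{2\sqrt{x}}{x+2}$, invoke Lemma \ref{l_1} for $g_1(x) \geq g_1(0)$, and conclude $f_1(x) \geq g_1(x) \geq g_1(0) = f_1(0)$ via the nonnegativity of the extra term and its vanishing at $x=0$. Your added remarks --- that the term $\frac{2\sqrt{x}}{x+2}$ is itself non-monotone (so one should not claim $f_1$ is increasing) and that strict positivity for $x>0$ gives uniqueness of the minimizer --- are accurate refinements the paper leaves implicit.
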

\begin{proof}

Let $g_1(x)$ be defined by 
\begin{eqnarray*}
g_1(x) &=& 2 {\left(x + 2\right)} \sqrt{\frac{2k - 4}{{\left(4k - x - 6\right)}{\left(x +2\right)}}} + 2 \, {\left(2 \, k - x - 4\right)} \sqrt{\frac{2k - 4}{{\left(2k +x \right)}{\left(2 \, k - x - 4\right)}}} \nonumber \\
 &+&  \frac{2 \, \sqrt{2 \, k - x - 6}}{2 \, k - x - 4}.
\end{eqnarray*}
From Lemma \ref{l_1} we have that $g_1(x)$ is increasing in $x$ for all $k\geq 5$, which implies that 
$$g_1(x) \geq g_1(0).$$
So, 
$f_1(x) \geq g_1(x) \geq g_1(0) = f_1(0),$
and the result follows.
\end{proof}

Note that $f_1(0) = ABC_{GG}(G_{0}^{1}(3,n-2))$ and Lemma \ref{l_2} implies that $f_1(x)$ is minimized to the graph $G_{0}^{1}(3,n-2)$ among all graphs $G_{i}^{1}(3,n-2)$ when $C_p$ and $C_q$ have odd lengths.

\vspace{0.4cm}

\noindent \textbf{Case $(ii):$ $C_p$ and $C_q$ are even cycles.}

\vspace{0.4cm}

From the symmetry of the process of removing vertices from cycle $C_p$ and adding them to $C_q$, we get that: if $k$ is odd, $4 \leq p \leq \frac{n-1}{2}$ and $\frac{n+3}{2} \leq q \leq n-3;$ if $k$ is even, $4 \leq p \leq \frac{n+1}{2}$ and $\frac{n+1}{2} \leq q \leq n-3.$ From these inequalities, we get: if $k$ is odd,  $4 \leq p \leq k-1$ and $k+1 \leq q \leq 2k-4$; if $k$ is even,  $4 \leq p \leq k$ and $k \leq q \leq 2k-4$. 

Let $x \in \mathbb{N}$ be the number of vertices removed from $C_q$ and added to $C_p.$ Note that in this process, $x$ should be even to keep both cycles of even length and that $p=4+x$ and $q=2k-4-x$. The following facts are true:
\begin{enumerate}
\item[(F3) ] If $k \geq 5$ is odd, then $0 \leq  x \leq k-5;$ 
\item[(F4) ] If $k \geq 5$ is even, then $0 \leq  x \leq k-4.$ 
\end{enumerate}

We will prove that for $k \geq 11,$ the graph $G^{1}_{0}(4,n-3)$ has minimal $ABC_{GG}$ index among all graph when $C_{p}$ and $C_{q}$ are both even cycles.

So, we can rewrite Equation (\ref{eq:lem:01}) of Lemma \ref{lem:01} as:
\begin{eqnarray}\label{ff1}
f_{2}(x):= 2 \, {\left(x + 4\right)} \sqrt{\frac{2 \, k - 3}{{\left(4 \, k - x - 6\right)} {\left(x + 4\right)}}}+ 2 \, {\left(2 \, k - x - 4\right)} \sqrt{\frac{2 \, k - 3}{{\left(2 \, k + x + 2\right)} {\left(2 \, k - x - 4\right)}}}. 
\end{eqnarray}

\vspace{1cm}

\begin{lemma} \label{l_3}
Let $n=2k-1$ with $k \geq 5.$ Let $x \geq 0$ and 
\begin{eqnarray}
f_{2}(x)= 2 \, {\left(x + 4\right)} \sqrt{\frac{2 \, k - 3}{{\left(4 \, k - x - 6\right)} {\left(x + 4\right)}}}+ 2 \, {\left(2 \, k - x - 4\right)} \sqrt{\frac{2 \, k - 3}{{\left(2 \, k + x + 2\right)} {\left(2 \, k - x - 4\right)}}}. \nonumber 
\end{eqnarray}
If $k \geq 11,$ then $f_{2}(0)$ is the minimal value of $f_{2}(x)$. If $k$ is odd and $5\leq k \leq 10$, then $f_{2}(k-5)$ is the minimal value of $f_{2}(x).$ If $k$ is even and $5\leq k \leq 10$, then $f_{2}(k-4)$ is the minimal value of $f_{2}(x).$
\end{lemma}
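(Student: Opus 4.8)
The plan is to reduce the whole statement to the sign of a single cubic, in the same spirit as Lemma \ref{l_1}, but now keeping track of the fact that the minimizer may sit at \emph{either} end of the admissible interval. First I would absorb the leading factors $(x+4)$ and $(2k-x-4)$ under the respective radicals to write $f_2$ in the compact form
$$f_2(x) = 2\sqrt{2k-3}\left(\sqrt{\frac{x+4}{4k-x-6}} + \sqrt{\frac{2k-x-4}{2k+x+2}}\right).$$
Differentiating, the constant factor $2\sqrt{2k-3}$ splits off and one gets
$$f_2'(x) = (4k-2)\sqrt{2k-3}\left[\frac{1}{(4k-x-6)^{3/2}\sqrt{x+4}} - \frac{1}{(2k+x+2)^{3/2}\sqrt{2k-x-4}}\right],$$
so $f_2'(x)$ has the same sign as
$$T(x) = (2k+x+2)^3(2k-x-4) - (4k-x-6)^3(x+4),$$
the even-cycle analogue of the polynomial $t(x)$ appearing in Lemma \ref{l_1}.

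Next I would factor $T$. Substituting $w = x-(k-4)$ (so $w=0$ is the symmetric graph $p=q=k$) and writing the two cubes as $(M\pm w)^3$ with $M=3k-2$, the even and odd parts collapse to
$$T(x) = 4w\left[(3k-2)^2 - (4k-3)\,w^2\right],$$
whose only zeros are $w=0$ and $w=\pm w_*$ with $w_* = (3k-2)/\sqrt{4k-3}$. On the admissible interval we have $w\le 0$, hence $4w\le 0$, while the bracket is nonnegative exactly when $|w|\le w_*$. Reading off signs, $f_2$ is nonincreasing on $[-w_*,0]$ and nondecreasing on $[-(k-4),-w_*]$: it is unimodal with an interior maximum at $w=-w_*$ (degenerating to a pure decrease when $w_*\ge k-4$), so its minimum over the closed interval is always attained at an endpoint. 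It then remains to compare $f_2(0)$ (the left endpoint $x=0$) with the right endpoint value, which is $x=k-4$ in the even case and $x=k-5$ in the odd case.

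The comparison splits by the size of $k$. For $5\le k\le 8$ one checks $w_*\ge k-4$, equivalently $k^3-11k^2+25k-13\le 0$, so $f_2$ is decreasing throughout and the minimum is at the right endpoint; the two cases with a genuine interior maximum, $k=9$ and $k=10$, are finite and are settled by direct evaluation, both favouring the right endpoint. For $k\ge 11$ I would prove $f_2(0)\le f_2(k-4)$, namely
$$\sqrt{2} + \sqrt{\frac{(2k-3)(k-2)}{k+1}} \;\le\; 2\sqrt{\frac{k(2k-3)}{3k-2}}.$$
Since $f_2$ decreases on $[-w_*,0]$ and $w_*>1$ for $k\ge 11$, one automatically has $f_2(k-4)\le f_2(k-5)$, so this single inequality covers both parities and yields that $f_2(0)$ is the global minimum. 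To establish it I would isolate the middle radical, check positivity of the isolated side (which follows from $4Q>P$ for the two ratios $P,Q$ above and from $k\ge 5$), and square twice, reducing to a polynomial inequality in $k$.

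The main obstacle is precisely this last inequality. The two sides are extremely close near the threshold (the gap is of order $10^{-2}$ at $k=11$ and smaller just below it), so the squared-out polynomial inequality is tight and must be shown to hold for all $k\ge 11$ with a genuine margin, meaning any crude estimate will fail. A parallel delicacy is confirming that the crossover falls strictly between $k=10$ and $k=11$, which is exactly what forces the case split in the statement and separates this lemma from the purely monotone situation of Lemma \ref{l_1}.
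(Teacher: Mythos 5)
Your proposal is correct, and although it shares the paper's overall skeleton --- locate the critical points of $f_2$, reduce to a comparison of endpoint values, and split at $k\approx 10.4$ --- it is tighter than the paper's argument precisely where the paper is weakest. The paper obtains the same three critical points $x=k-4$ and $x=k-4\pm(3k-2)/\sqrt{4k-3}$ from $f_2'(x)=0$, but then discards the interior root $x_3$ on the grounds that it cannot be an integer (an argument that says nothing about the real function, and leaves the shape of $f_2$ between lattice points unexamined) and certifies $x=k-4$ only by a second-derivative computation; your substitution $w=x-(k-4)$, which collapses the sign of $f_2'$ to the odd cubic $4w\left[(3k-2)^2-(4k-3)w^2\right]$, establishes genuine unimodality (interior maximum at $w=-w_*$, minimum forced to an endpoint) and makes the integrality dodge unnecessary. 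Likewise, the paper compares $f_2(0)$ against $f_2(k-5)$ and against $f_2(k-4)$ in two separate computations, via $m(k)$ and $h(k)$, and settles their signs by numerical root-finding for $m$, $m'$ and $h'$; your observation that $f_2(k-5)\ge f_2(k-4)$ when $w_*>1$ reduces both parities for $k\ge 11$ to the single inequality $f_2(0)\le f_2(k-4)$. As for the step you flag as the main obstacle, it does close, and exactly: writing $P=\frac{(2k-3)(k-2)}{k+1}$ and $Q=\frac{k(2k-3)}{3k-2}$, the inequality $2\sqrt{2}+2\sqrt{P}\le 4\sqrt{Q}$ is equivalent (the isolated side $4Q-2-P=\frac{2k^3+15k^2-46k+16}{(3k-2)(k+1)}$ being positive for all $k\ge 3$) to
\begin{equation*}
\bigl(2k^3+15k^2-46k+16\bigr)^2-8(2k-3)(k-2)(3k-2)^2(k+1)
=(k-4)^2(2k-1)(k-2)\bigl(2k^2-21k+2\bigr)\;\ge\;0,
\end{equation*}
and $2k^2-21k+2$ has largest root $\frac{21+5\sqrt{17}}{4}\approx 10.404$, so the inequality holds for every real $k\ge 11$ and fails for $9\le k\le 10$. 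This one factorization simultaneously proves the $k\ge 11$ case and pins the crossover strictly between $k=10$ and $k=11$, which is sharper than the paper's numerical treatment (the paper does record $\frac{21+5\sqrt{17}}{4}$ as the exact root of its $h(k)$, consistent with this). Your remaining finite checks are also sound: the criterion $w_*\ge k-4\iff k^3-11k^2+25k-13\le 0$ does hold exactly for $5\le k\le 8$, forcing monotone decrease and a right-endpoint minimum there, and direct evaluation at $k=9,10$ (values $9.3092$ vs.\ $9.2967$, and $9.8608$ vs.\ $9.8561$) favours the right endpoint, matching the statement.
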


\begin{proof}
The functions $f_{2}^{\prime}(x)$ and $f_{2}^{\prime \prime}(x)$ are given by
\begin{eqnarray*}
f_{2}^{\prime}(x) &=& \frac{2 \, {\left(2 \, k - 1\right)} {\left(2 \, k - 3\right)}}{{\left(4 \, k - x - 6\right)}^{2} {\left(x + 4\right)} \sqrt{\frac{2 \, k - 3}{{\left(4 \, k - x - 6\right)} {\left(x + 4\right)}}}}  -\frac{2 \, {\left(2 \, k - 1\right)} {\left(2 \, k - 3\right)}}{{\left(2 \, k + x + 2\right)}^{2} {\left(2 \, k - x - 4\right)} \sqrt{\frac{2 \, k - 3}{{\left(2 \, k + x + 2\right)} {\left(2 \, k - x - 4\right)}}}} \nonumber \\  
f_{2}^{\prime \prime}(x) &=& \frac{2 \, {\left(2 \, k - 1\right)} {\left(2 \, k - 3\right)}^{2} {\left(2 \, k - 2x - 7\right)}}{{\left(2 \, k + x + 2\right)}^{4} {\left(2 \, k - x - 4\right)}^{3} \left(\frac{2 \, k - 3}{{\left(2 \, k + x + 2\right)} {\left(2 \, k - x - 4\right)}}\right)^{\frac{3}{2}}} - \frac{2{\left(2 \, k - 1\right)} {\left(2 \, k - 3\right)}^{2} {\left(2 \, k - 2x - 9\right)}}{(x+4)^3 \, {\left(4 \, k - x - 6\right)}^{4} \left(\frac{2 \, k - 3}{\left(x+4\right)( 4k - x - 6) }\right)^{\frac{3}{2}}} \nonumber
\end{eqnarray*}

Taking $f_{2}^{\prime}(x) = 0$, we obtain $4(k-x-4)(4kx^2-3x^2-8k^2x+38kx-24x+4k^3-44k^2+100k-52)=0,$ 
which has critical points $x_1=k-4$, $x_2= k-4 + \frac{(3k - 2)}{\sqrt{\left(4k - 3\right)}}$ and $x_3= k-4  - \frac{(3k - 2)}{\sqrt{\left(4k - 3\right)}}$. 
Suppose that $x_3$ is integer. In this case, $(4k-3)$ should be a perfect square, that is, $4k-3 = m^2$. 
Thus, $\frac{(3k-2)}{\sqrt{4k-3}}=\frac{\frac{3m^2}{4} + \frac{1}{4}}{m}=\frac{3m}{4} + \frac{1}{4m}$ cannot be integer for $m>1,$ and we get a contradiction. So, the only critical points are $x = 0$ and $x=k-4.$
We get that 
$$f_{2}^{\prime \prime}(k-4) = \frac{4 \, {\left(2 \, k - 1\right)} {\left(2 \, k - 3\right)}^{2}}{{\left(3 \, k - 2\right)}^{4} k^{3} \left(\frac{2 \, k - 3}{k{\left(3 \, k - 2\right)}}\right)^{\frac{3}{2}}},$$
is positive, and so $x=k-4$ is a minimum of the function $f_2(x).$ 

Let $k \geq 5$ and $k$ is odd. In this case, $0 \leq x \leq k-5$ and we need to prove whether $f_2(0) > f_2(k-5)$ or $f_2(0) < f_2(k-5).$ Let 
\begin{eqnarray*}
m(k) = f_2(k-5) - f_2(0) &=& 2 \, {\left(k + 1\right)} \sqrt{\frac{2 \, k - 3}{3{\left(k + 1\right)} {\left(k - 1\right)}}} + 2 \, {\left(k - 1\right)} \sqrt{\frac{2 \, k - 3}{{\left(3 \, k - 1\right)} {\left(k - 1\right)}}}- \nonumber \\
&&  - 2 \, {\left(k - 2\right)} \sqrt{\frac{2 \, k - 3}{{\left(k + 1\right)} {\left(k - 2\right)}}} - 2\sqrt{2}
\end{eqnarray*}

The derivatives of $m(k)$ are given as follows:
\begin{eqnarray*}
\begin{array}{l}

m^{\prime}(k)=\frac{2 \, \sqrt{3} {\left(k^{2} - 2 \, k + 2\right)}}{3 \, \sqrt{{\left(2 \, k - 3\right)} {\left(k + 1\right)} {\left(k - 1\right)}} {\left(k - 1\right)}} + \frac{2 \, {\left(3 \, k^{2} - 2 \, k - 2\right)}}{\sqrt{{\left(3 \, k - 1\right)} {\left(2 \, k - 3\right)} {\left(k - 1\right)}} {\left(3 \, k - 1\right)}} - \frac{2 \, k^{2} + 4 \, k - 13}{\sqrt{{\left(2 \, k - 3\right)} {\left(k + 1\right)} {\left(k - 2\right)}} {\left(k + 1\right)}} \\ \\ 

m^{\prime \prime}(k) = \frac{4 \, k^{4} + 16 \, k^{3} - 156 \, k^{2} + 316 \, k - 191}{2 \, \sqrt{{\left(2 \, k - 3\right)} {\left(k + 1\right)} {\left(k - 2\right)}} {\left(2 \, k - 3\right)} {\left(k + 1\right)}^{2} {\left(k - 2\right)}} -\frac{2 \, \sqrt{3} {\left(k^{4} - 4 \, k^{3} + 12 \, k^{2} - 10 \, k - 2\right)}}{3 \, \sqrt{{\left(2 \, k - 3\right)} {\left(k + 1\right)} {\left(k - 1\right)}} {\left(2 \, k - 3\right)} {\left(k + 1\right)} {\left(k - 1\right)}^{2}} - \\ \\ -\frac{2 \, {\left(9 \, k^{4} - 12 \, k^{3} - 36 \, k^{2} + 78 \, k - 38\right)}}{\sqrt{{\left(3 \, k - 1\right)} {\left(2 \, k - 3\right)} {\left(k - 1\right)}} {\left(3 \, k - 1\right)}^{2} {\left(2 \, k - 3\right)} {\left(k - 1\right)}}
\end{array}
\end{eqnarray*}

We know that $m(k)$ and $m^{\prime}(k)$ are continuous in this interval. Using a numerical method, we get $k_1=5$ and $k_2 \simeq 10.3147$ as exact real roots for $m(k).$ Using a numerical method for $m^{\prime}(k)$, we obtain  
$k^{\prime}_1 \simeq 7.5248$. Evaluating $m^{\prime\prime}(k)$ for $k^{\prime}_1,$ 
we have that $m^{\prime\prime}(7.5248)>0,$ and $k^{\prime}_1$ is an absolute minimum. Since $m(k)<0$ for $5< k \leq 10$, $h(k^{\prime}_1)<0$ and $h(11)>0$, then, we have that $m(k) \geq 0$ for all $k\geq 10.3147$. Therefore, $m(k) \geq 11$, which implies that $f_2(x) \geq f_2(0)$, and the graph $G_{0}(4,n-3)$ minimizes $f_2(x).$
Now, let $k$ be odd and $5\leq k \leq 10.$ In this case, $m(k)\leq 0,$ which implies that $f_2(k-5)$ is minimum and the graph $G^{1}_{k-5}(p,q) = G^{1}_{k-5}(k-1,k+1)$ minimizes $f_{2}(x).$
\begin{comment}
\textbf{Conclusion:}  $G_{0}(\frac{n+3}{2},\frac{n-1}{2})$ minimizes $f_2(x)$ for $5 \leq k \leq 9.$ $G_{0}(4,n-3)$ minimizes $f_2(x)$ for $k \geq 11.$ 
\end{comment}

If $k$ is even, since $0 \leq x \leq k-4$, we need to prove that $f_{2}(0)<f_{2}(k-4).$
Replacing the extremes $x_1=0$ and $x_2=k-4$ in $f_{2}(x)$ we obtain
$$f_{2}(0) = 2 \, {\left(k - 2\right)} \sqrt{\frac{2 \, k - 3}{{\left(k + 1\right)} {\left(k - 2\right)}}} + 2\sqrt{2},$$

$$f_{2}(k-4) = 4 \, k \sqrt{\frac{2 \, k - 3}{k{\left(3 \, k - 2\right)}}}.$$

\noindent Consider the function 
\begin{eqnarray*}
h(k) &=&  f_{2}(k-4) - f_{2}(0) = 4 \, k \sqrt{\frac{2 \, k - 3}{k{\left(3 \, k - 2\right)}}} - 2 \, {\left(k - 2\right)} \sqrt{\frac{2 \, k - 3}{{\left(k + 1\right)} {\left(k - 2\right)}}} - 2\sqrt{2} \nonumber \\
%&=& 4 \, k \sqrt{\frac{2 \, k - 3}{k{\left(3 \, k - 2\right)}}} - 2 \, k\sqrt{\frac{2 \, k - 3}{{\left(k + 1\right)} {\left(k - 2\right)}}} + 4\, \sqrt{\frac{2 \, k - 3}{{\left(k + 1\right)}{\left(k - 2\right)}}}   - 2\sqrt{2}.
\end{eqnarray*}
The derivatives of $h(k)$ are given as follows:
\begin{eqnarray*}
\begin{array}{l}
h^{\prime}(k)= \frac{4 \, {\left(3 \, k^{2} - 4 \, k + 3\right)}}{\sqrt{{\left(3 \, k - 2\right)} {\left(2 \, k - 3\right)} k} {\left(3 \, k - 2\right)}} - \frac{2 \, k^{2} + 4 \, k - 13}{\sqrt{{\left(2 \, k - 3\right)} {\left(k + 1\right)} {\left(k - 2\right)}} {\left(k + 1\right)}} \\ \\ 
h^{\prime \prime}(k)=\frac{4 \, k^{4} + 16 \, k^{3} - 156 \, k^{2} + 316 \, k - 191}{2 \, \sqrt{{\left(2 \, k - 3\right)} {\left(k + 1\right)} {\left(k - 2\right)}} {\left(2 \, k - 3\right)} {\left(k + 1\right)}^{2} {\left(k - 2\right)}} - \frac{12 \, {\left(3 \, k^{4} - 8 \, k^{3} + 18 \, k^{2} - 18 \, k + 3\right)}}{\sqrt{{\left(3 \, k - 2\right)} {\left(2 \, k - 3\right)} k} {\left(3 \, k - 2\right)}^{2} {\left(2 \, k - 3\right)} k}.
\end{array}
\end{eqnarray*}
We know that $h(k)$ and $h^{\prime}(k)$ are continuous in this interval. Using a numerical method, we get $k_1 =  \frac{21+5\sqrt{17}}{4} \simeq 10.4039$ and $k_2=4$ (with multiplicity 2) as exact real roots for $h(k).$ 
Making $h^{\prime}(k)=0$, we obtain 
$${\left(36 \, k^{7} - 312 \, k^{6} + 172 \, k^{5} + 1600 \, k^{4} - 2595 \, k^{3} + 1546 \, k^{2} - 332 \, k + 72\right)} {\left(2 \, k - 3\right)} {\left(k - 4\right)} = 0,$$
and the critical points are $k^{\prime}_1 = 4$ and $k^{\prime}_2 = 7.3648$. Evaluating $h^{\prime\prime}(k)$ in $k^{\prime}_2,$ 
we have that $h^{\prime\prime}(7.3648)>0,$ and $k_2$ is an absolute minimum. Since $h(3)<0, h(8)<0, h(k^{\prime}_2)<0$ and $h(11)>0,$ we have that $h(k) \geq 0$ for all $k \geq \frac{21+5\sqrt{17}}{4}.$ Therefore, $h(k)>0$ for all $k\geq 11,$ which implies that $f_{2}(x) \geq f_{2}(0),$ and the graph $G^{1}_{0}(4,n-3)$ minimizes $f_2(x).$
Now, let $k$ be even and $6\leq k \leq 10.$ In this case, $h(k)<0,$ which implies that $f_2(k-4)$ is minimum and the graph $G^{1}_{k-4}(p,q) = G^{1}_{k-4}(k,k)$ minimizes $f_{2}(x).$

\end{proof}

\begin{comment}
\begin{remark} From Lemma \ref{l_3}, if $5 \leq k \leq 10,$ then $f_{2}(k-4)<f_2(0).$ In this case, $G_{0}(
$
Being values of $k=5 \ldots 10 \, (n=9 \ldots 19)$, we compute the values $(p,q,x,f_2(x))$, obtained $(4,6,0,6.5701),(6,6,2,7.3485),(6,8,2,8.0553),(8,8,4,8.6969),(8,10,4,9.2967)$ and $(10,10,6,9.8561)$, respectively, where, we have the minimum values of $f_2(x)$.
Now, let $k$ be odd and $5\leq k \leq 9.$ In this case, $h(k)<0,$ which implies that $f_2(k-5)$ is minimum and the graph $G_{\frac{k-5}{2}}(p,q) = G_{\frac{k-5}{2}}(k-1,k+1)$ minimizes $f_{2}(x).$
\end{remark}
\end{comment}

The next result shows that $G_{0}^{1}(3,n-2)$ has minimal $ABC_{GG}$ among all graphs $G$ in $B_{1}(n)$ when $n$ is odd.

\begin{lemma}
\label{l_4} Let $G \in B_{1}(n)$ with odd $n$ and $n \geq 9$. Then, $$ABC_{GG}(G) \geq ABC_{GG}(G_{0}^{1}(3,n-2)).$$ 
\end{lemma}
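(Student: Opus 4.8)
The plan is to exploit the parity constraint forced by $n=2k-1$ being odd: since $p+q=n+1$ is even, the cycles $C_p$ and $C_q$ must have the same parity, so every $G\in B_1(n)$ lies in exactly one of the two regimes already analyzed. If both cycles are odd, Lemma~\ref{l_2} gives at once $ABC_{GG}(G)=f_1(x)\ge f_1(0)=ABC_{GG}(G_0^1(3,n-2))$, so this regime is settled. It remains to treat the even--even regime and to compare its minimum against the odd--odd minimizer $f_1(0)$.

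For the even--even regime, Lemma~\ref{l_3} reduces matters to a single inequality: since $ABC_{GG}(G)=f_2(x)\ge\min_x f_2(x)$, where the minimum equals $f_2(0)$ for $k\ge 11$, equals $f_2(k-5)$ for $k\in\{5,7,9\}$, and equals $f_2(k-4)$ for $k\in\{6,8,10\}$, the lemma follows once $\min_x f_2(x)\ge f_1(0)$ is established. The six values $5\le k\le 10$ are a finite check: for each I would substitute into the closed forms for $f_1(0)$ and the relevant $f_2(k-5)$ or $f_2(k-4)$ and verify the inequality by a direct numerical evaluation.

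The substantial case is $k\ge 11$, where I must prove $f_2(0)\ge f_1(0)$. Here $f_2(0)=2\sqrt2+B_2$ with $B_2=2\sqrt{(2k-4)(2k-3)/(2k+2)}$, while $f_1(0)=A_1+A_2+A_3$ with $A_1=4\sqrt{(k-2)/(2(2k-3))}$, dominant summand $A_2=2(2k-4)/\sqrt{2k}$, and $A_3=2\sqrt{2k-6}/(2k-4)$. I would regroup $\Phi(k):=f_2(0)-f_1(0)=(2\sqrt2-A_1)-A_3-(A_2-B_2)$. A direct computation gives $A_2^2-B_2^2=2(2k-4)(k-4)/(k(k+1))>0$, so $B_2<A_2$; thus the dominant term of $f_2(0)$ is actually the smaller one, and the positivity of $\Phi$ must be carried entirely by the constant gap in $2\sqrt2-A_1$, where $A_1<2$ is elementary. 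Writing $A_2-B_2=(A_2^2-B_2^2)/(A_2+B_2)<(k-4)\sqrt2/(\sqrt k\,(k+1))$ via $A_2+B_2>A_2$, the target reduces to $2\sqrt2-2>\Psi(k)$, where $\Psi(k)=2\sqrt{2k-6}/(2k-4)+(k-4)\sqrt2/(\sqrt k\,(k+1))$.

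The main obstacle is exactly this uniform estimate: both dominant terms grow like $2\sqrt2\,\sqrt k$ and cancel to leading order, so no crude bound decides the sign, and $\Psi$ is not a sum of monotone pieces. I plan to bound its two summands separately on $[11,\infty)$: the first is strictly decreasing, hence at most its value $4/9$ at $k=11$, while the second attains its maximum at the interior point $k=\tfrac{13+\sqrt{185}}{2}$, where it stays below $\tfrac{3}{10}$; together these give $\Psi(k)\le\tfrac{4}{9}+\tfrac{3}{10}=\tfrac{67}{90}<2\sqrt2-2$. Equivalently, one may clear the radicals in $2\sqrt2-2>\Psi(k)$ and reduce to a polynomial inequality in $k$ that is checked for $k\ge 11$, mirroring the derivative-and-root technique used in Lemma~\ref{l_3}.
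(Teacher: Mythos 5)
Your proof is correct, and its skeleton coincides with the paper's: both arguments use the parity of $p+q=n+1$ to split $B_1(n)$ into the odd--odd and even--even regimes, dispose of the first via Lemma \ref{l_2}, invoke Lemma \ref{l_3} to locate the even--even minimum at $f_2(0)$ for $k\geq 11$ and at $f_2(k-5)$ or $f_2(k-4)$ for $5\leq k\leq 10$, handle the small $k$ by finitely many direct substitutions, and reduce the rest to the single inequality $f_2(0)>f_1(0)$ for $k\geq 11$. Where you genuinely diverge is in proving that inequality. The paper splits the constant as $2\sqrt{2}=\tfrac{\sqrt{2}}{5}+\tfrac{9\sqrt{2}}{5}$, pairs the dominant radical of $f_2(0)$ against the dominant term $\tfrac{2\sqrt{2}(k-2)}{\sqrt{k}}$ of $f_1(0)$ in one case and the small terms against $\tfrac{9\sqrt{2}}{5}$ in the other, then squares each difference and determines the sign of the resulting polynomials in $u=\sqrt{k}$ and $u=\sqrt{k-3}$ by numerically locating their roots (e.g.\ $u_1\simeq 1.42596$). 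You instead keep the two dominant terms together and control their gap by rationalization --- your identity $A_2^2-B_2^2=\tfrac{2(2k-4)(k-4)}{k(k+1)}$ is correct, as is the consequent bound $A_2-B_2<\tfrac{(k-4)\sqrt{2}}{\sqrt{k}\,(k+1)}$ --- and then close with the exactly verifiable estimate $\tfrac{4}{9}+\tfrac{3}{10}<2\sqrt{2}-2$, using the monotone decrease of $\sqrt{2k-6}/(k-2)$ past $k=4$ and the closed-form maximizer $k=\tfrac{13+\sqrt{185}}{2}$ of the second summand (value $\approx 0.2522<\tfrac{3}{10}$); I verified these computations and the chain is sound. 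Your route buys rigor and transparency: every constant is checkable by hand, with no appeal to unproved numerical root locations, which is the weakest point of the paper's Cases $(i)$ and $(ii)$. In exchange, the paper's grouping establishes $f_2(0)>f_1(0)$ already for all $k\geq 5$, slightly more than needed, whereas your $\tfrac{4}{9}$ bound is tuned to $k\geq 11$ --- harmless, since both treatments settle $5\leq k\leq 10$ by the same finite numerical comparison of $f_1(0)$ with $f_2(k-4)$ or $f_2(k-5)$.
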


\begin{proof}

Let $n=2k-1$ such that $n \geq 9.$ First, suppose that $k\geq 11.$ From Lemmas \ref{l_2} and \ref{l_3}, we should prove that $f_{1}(0)<f_{2}(0).$
 Considering $h(k) = f_{2}(0) - f_{1}(0),$ we have that
\begin{eqnarray*}
h(k) &=& 2 \, {\left(k - 2\right)} \sqrt{\frac{2 \, k - 3}{{\left(k + 1\right)} {\left(k - 2\right)}}} + 2\sqrt{2} - \left(  \frac{2 \, \sqrt{2} {\left(k - 2\right)}}{\sqrt{k}} + 2 \sqrt{\frac{2k - 4}{2k - 3}} + \frac{\sqrt{2 \, k - 6}}{k - 2}  \right)  \\ 
h(k) &=& 2 \, {\left(k - 2\right)} \sqrt{\frac{2 \, k - 3}{{\left(k + 1\right)} {\left(k - 2\right)}}} + \frac{\sqrt{2}}{5} + \frac{9\sqrt{2}}{5} - \left(  \frac{2 \, \sqrt{2} {\left(k - 2\right)}}{\sqrt{k}} + 2 \sqrt{\frac{2k - 4}{2k - 3}} + \frac{\sqrt{2 \, k - 6}}{k - 2}  \right)  \\ 
h(k) &=& \underbrace{2 \, {\left(k - 2\right)} \sqrt{\frac{2 \, k - 3}{{\left(k + 1\right)} {\left(k - 2\right)}}} + \frac{\sqrt{2}}{5} - \frac{2 \, \sqrt{2} {\left(k - 2\right)}}{\sqrt{k}}}_{i} + \underbrace{ \frac{9\sqrt{2}}{5} - 2 \sqrt{\frac{2k - 4}{2k - 3}} - \frac{\sqrt{2 \, k - 6}}{k - 2} }_{ii} 
\end{eqnarray*}
\noindent \emph{Case (i).} Let $m(k)$ be defined as 
\begin{eqnarray*}
m(k) &=& 2 \, {\left(k - 2\right)} \sqrt{\frac{2 \, k - 3}{{\left(k + 1\right)} {\left(k - 2\right)}}} + \frac{\sqrt{2}}{5} - \frac{2 \, \sqrt{2} {\left(k - 2\right)}}{\sqrt{k}} \\ 
&=& \frac{\sqrt{k}\sqrt{2k-3}(10k-20) -\sqrt{2}\sqrt{k-2}\sqrt{k+1}(10k-\sqrt{k}-20) }{5\sqrt{k-2}\sqrt{k}\sqrt{k+1}}.
\end{eqnarray*}

\vspace{0.4cm}

\noindent Let $t(k) = d_1(k)^2 - d_2(k)^2$, where $d_1(k)= \sqrt{k}\sqrt{2k-3}(10k-20)$  and \\ \mbox{$d_2(k)=-\sqrt{2}\sqrt{k-2} \sqrt{k+1}(10k-\sqrt{k}-20).$} We need to verify if $t(k)>0$. Then, we have that, 
\begin{eqnarray*}
t(k) &=& d_1(k)^2 - d_2(k)^2\\ 
     &=& k(2k-3)(10k-20)^2-2(k-2)(k+1)(10k-\sqrt{k}-20)^2 \\ 
    &=& 2(k-2)(20k^2\sqrt{k} -51k^2-20k\sqrt{k}+299k-40\sqrt{k}-400)
\end{eqnarray*}
Let $\sqrt{k}=u \geq 0$ and $r(k)=20k^2\sqrt{k} -51k^2-20k\sqrt{k}+299k-40\sqrt{k}-400$. Then, $r(u) =20u^5 -51u^4 -20u^3 +299u^2-40u-400$. We have that $r(u) \geq 0$ for all $u \geq u_1 \simeq 1.42596$ (the unique real root). Then, $r(k) \geq 0$ for all  $k \geq k_1 \simeq 2.03336$. Since $k\geq 5,$  we get $t(k)>0$ and consequently $m(k)>0$ for all $k \geq 5.$ 
\begin{comment}
We plot an example, for $r(u)$ in the Figure \ref{r_u1}.

\begin{figure}[!h]
    \centering
    \includegraphics[height=7.0cm]{r_u1.jpg}
    \caption{Function $r(u)$ for $1.3 \leq u \leq 2$}
    \label{r_u1}
\end{figure}
\end{comment}

\noindent \emph{Case (ii).} Let $g(k)$ be defined as 
\begin{eqnarray*}
g(k) &=&  \frac{9\sqrt{2}}{5} - 2 \sqrt{\frac{2k - 4}{2k - 3}} - \frac{\sqrt{2 \, k - 6}}{k - 2} \\
&=&\frac{\sqrt{2}\sqrt{2k-3}(9k-5\sqrt{k-3}-18) -10\sqrt{k-2}\sqrt{2}(k-2)}{5(k-2)\sqrt{2k-3}}.
\end{eqnarray*}

\noindent Let $t(k) = d_1(k)^2 - d_2(k)^2$, where $d_1(k)= 2(2k-3)(9k-5\sqrt{k-3}-18)^2$ and\\ \mbox{$d_2(k)= -10\sqrt{k-2}\sqrt{2}(k-2).$} We need to check whether $t(k) \geq 0$. Note that 
\begin{eqnarray*}
t(k) &=& d_1(k)^2 - d_2(k)^2 \\ 
 &=&  2(2k-3)(9k-5\sqrt{k-3}-18)^2 -200(k-2)^3 \\ 
 &=& 2(62k^3-180k^2\sqrt{k-3}-241k^2+630k\sqrt{k-3}+195k-540\sqrt{k-3}+53).
\end{eqnarray*}
\noindent By making the variable change $u = \sqrt{k-3}$ we can prove that $t(k) \geq 0$ for all $k \geq 3,$ which implies that $g(k) \geq 0.$ Therefore, from Cases $(i)$ and $(ii)$, we have that $h(k)>0.$ 

Let $k$ be even such that $5 \leq k \leq 10.$ From Lemmas \ref{l_2} and \ref{l_3}, we should prove that $f_{1}(0)<f_{2}(k-4).$ Let $h(k) = f_2(k-4) - f_1(0)$ defined as 
\begin{eqnarray*}
h(k) &=& 4 \, k \sqrt{\frac{2 \, k - 3}{{\left(3 \, k - 2\right)} k}} - \frac{2 \, \sqrt{2} {\left(k - 2\right)}}{\sqrt{k}} - 2 \, \sqrt{\frac{2k - 4}{2 \, k - 3}} - \frac{\sqrt{2 \, k - 6}}{k - 2}.
\end{eqnarray*}
\noindent By replacing each $k$ in $h(k)$ we obtain that $h(k) \geq 0.$ 

Let $k$ be odd such that $5 \leq k \leq 10.$ From Lemmas \ref{l_2} and \ref{l_3}, we should prove that $f_{1}(0)<f_{2}(k-5).$ Let $h(k) = f_2(k-5) - f_1(0)$ defined as

\begin{eqnarray*}
h(k) &=& 2 \, {\left(k + 1\right)} \sqrt{\frac{2 \, k - 3}{3{\left(k + 1\right)} {\left(k - 1\right)}}} + 2 \, {\left(k - 1\right)} \sqrt{\frac{2 \, k - 3}{{\left(3 \, k - 1\right)} {\left(k - 1\right)}}} - \frac{2 \, \sqrt{2} {\left(k - 2\right)}}{\sqrt{k}} - \nonumber \\ \\
&& -2 \, \sqrt{\frac{2k - 4}{2 \, k - 3}} - \frac{\sqrt{2 \, k - 6}}{k - 2}.
\end{eqnarray*}
\noindent By replacing each $k$ in $h(k)$ we obtain that $h(k) \geq 0.$

Now, the proof is complete.

\begin{comment}
is   and $r(k)=62k^3-180k^2\sqrt{k-3}-241k^2+630k\sqrt{k-3}+195k-540\sqrt{k-3}+53$. Then, $r(u)=62u^6-180u^5+317u^4-450u^3+423u^2-270u+143$. By computing $r_{uu}(u),$ it is not difficult to see that $r(u)$ is increasing for all $u \geq u_1 \approx 1.1089.$ So, $r(k) \geq 0$ for all $k \geq 3.$  

Making $r(u)=0$, we have $u_1 \simeq 1.1089$ as one of its root. Being $r_{uu}(u) = 1860u^4-3600u^3+3804u^2-2700u+846$ and $r_{uu}(u_1)>0$, then $u_1$ is a relative minimum. Substituting the extremal point $r(u_0)=0$ we have that $r_{uu}(u)>0$ being a relative minimum. We have that $r(0)>r(1.1089)$, then, $u_1$ is an absolute minimum. Analyzing the function we have that $r(u)>0$ for $0 \leq r(u) \leq 1.1089$ (the function is decreasing) and $r(u)>1.1089$ (the function is increasing). Then, $r(k) \geq 0$ for all $k \geq 3$. Being $k \geq 5$ and integer, then $t(k)>0$. Thus, from cases $i$ and $ii$, we have that $h(k)>0$ and we get the results.

 We plot an example, for $r(u)$ in the Figure \ref{r_u2}.

\begin{figure}[!h]
    \centering
    \includegraphics[height=7.0cm]{r_u2.jpg}
    \caption{Function $r(u)$ for $-1.0 \leq u \leq 2.0$}
    \label{r_u2}
\end{figure}   
\end{comment}

\end{proof}

\subsection{When $n$ is even}

Note that when $n=2k$ such that $n=p+q-1,$ We should have that one of the cycles has odd lenght and other has even lenght. Recall the graph $G^1_{0}(3,n-2).$ 
From the symmetry of the process of removing vertices from cycle $C_q$ and adding them to
$C_q$, we get that $3 \leq p \leq n-3$ and $4 \leq q \leq n-2,$ which imply that
\begin{itemize}
\item [(F5) ] $3 \leq p \leq 2k-3$ and $4 \leq q \leq 2k-2$. 
\end{itemize}

Let $x \in \mathbb{N}$ be the number of vertices removed from $C_q$ and added to $C_p.$ Note that in this process, $x$ should be even to keep both cycles of even length such that $p=3+x,$ $q=2k-2-x\,$ and $$0 \leq  x \leq 2k-6.$$ 
Next, we prove that $G^1_{0}(3,n-2)$ has minimal  $ABC_{GG}(G)$ among all graphs $G \in B_{1}(n)$ with $n$ even. Equation (\ref{eq:lem:02}) of Lemma \ref{lem:02} can be rewrittem as a function of $n$ and $x:$ 
\begin{eqnarray*}
f_{2}(x)= 2 \, {\left(n - x - 2\right)} \sqrt{\frac{n - 2}{{\left(n + x + 2\right)} {\left(n - x - 2\right)}}} + 2 \, {\left(x + 2\right)} \sqrt{\frac{n - 3}{{\left(2 \, n - x - 4\right)} {\left(x + 2\right)}}} + \frac{2 \, \sqrt{x}}{x + 2}.
\end{eqnarray*}
Note that $f(0)$ is equal to $ABC_{GG}(G^1_{0}(3,n-2)).$

\begin{lemma} \label{l_5} 
Let $ n \geq 5$ and $x \geq 0.$ Let $g_2(x)$ be defined as 
\begin{eqnarray*}
g_{2}(x)= 2 \, {\left(n - x - 2\right)} \sqrt{\frac{n - 2}{{\left(n + x + 2\right)} {\left(n - x - 2\right)}}} + 2 \, {\left(x + 2\right)} \sqrt{\frac{n - 3}{{\left(2 \, n - x - 4\right)} {\left(x + 2\right)}}}.
\end{eqnarray*}
Then, $g_2(x)$ has its minimum value in  $g_2(0).$ 

\end{lemma}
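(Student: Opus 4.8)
The plan is to first rewrite $g_2$ in the reduced form
\[
g_2(x) = 2\sqrt{n-2}\,\sqrt{\frac{n-x-2}{n+x+2}} + 2\sqrt{n-3}\,\sqrt{\frac{x+2}{2n-x-4}},
\]
obtained by cancelling the common factor inside each radical. Written this way, $g_2 = \phi + \psi$ is a sum of a decreasing function $\phi$ (first summand) and an increasing function $\psi$ (second summand), so monotonicity is not automatic, and in fact it fails: on the relevant domain $0 \le x \le n-6$ (forced by $p = 3+x \ge 3$ and $q = n-2-x \ge 4$) the function rises on an initial stretch and then decreases slightly near the right endpoint. I will therefore \emph{not} attempt to prove $g_2'\ge 0$; instead I will show that $g_2$ is unimodal with an interior maximum, so that its minimum over the closed interval is attained at an endpoint, and then compare the two endpoint values.

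For the unimodality step I would differentiate,
\[
g_2'(x) = \frac{2(n-1)\sqrt{n-3}}{(2n-x-4)^{3/2}\sqrt{x+2}} - \frac{2n\sqrt{n-2}}{(n+x+2)^{3/2}\sqrt{n-x-2}},
\]
and, squaring the two positive summands as in the $d_1^2-d_2^2$ device of Lemmas \ref{lem:b1_1} and \ref{l_4}, reduce the sign of $g_2'$ to that of the quartic
\[
P(x) = (n-3)(n-1)^2(n+x+2)^3(n-x-2) - n^2(n-2)(2n-x-4)^3(x+2),
\]
so that $g_2'(x)\ge 0 \iff P(x)\ge 0$. The leading coefficient of $P$ equals $3n^2-7n+3>0$, and a short computation gives $P(0)>0$ and $P(n-6)<0$ (each an explicit polynomial inequality in $n$). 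The goal is then to prove that $P$ changes sign exactly once on $[0,n-6]$, passing from positive to negative; this yields that $g_2$ is increasing and then decreasing, whence $\min_{[0,n-6]} g_2 = \min\{g_2(0),\,g_2(n-6)\}$.

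Finally I would compare the endpoints. Using
\[
g_2(0) = \frac{2(n-2)}{\sqrt{n+2}} + \frac{2\sqrt{n-3}}{\sqrt{n-2}}, \qquad g_2(n-6) = 2\sqrt{2} + \frac{2\sqrt{(n-3)(n-4)}}{\sqrt{n+2}},
\]
the inequality $g_2(0) < g_2(n-6)$ reduces, after isolating the radicals and squaring, to a polynomial inequality in $n$ that I would verify by a substitution $u=\sqrt{n-c}$ for a suitable constant $c$ together with a nonnegativity check, exactly in the spirit of the $r(u)$ arguments in Lemma \ref{l_4}; the finitely many small even values of $n$ for which $[0,n-6]$ is short are then checked directly. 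Combined with the unimodality this gives $g_2(x) \ge g_2(0)$ throughout, as claimed.

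The step I expect to be the main obstacle is controlling the sign of the quartic $P$ on $[0,n-6]$. Here $P$ is \emph{not} monotone on the interval — it can decrease, rise again, and only then cross zero near the right end — so a one-line derivative argument is unavailable. The honest way to secure the single downward crossing is a root count: show that the interior critical values of $P$ on $(0,n-6)$ stay positive, equivalently that $P$ has exactly two real roots with only one of them inside $(0,n-6)$. I would attempt this via a discriminant or Sturm-type analysis, or by a further substitution that lowers the effective degree before the positivity check. By comparison, the endpoint inequality of the previous paragraph is routine, being a single-variable squaring argument.
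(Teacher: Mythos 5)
Your proposal takes a genuinely different route from the paper, and in this case the difference matters, because the paper's own argument is broken. The paper tries to prove that $g_2$ is \emph{increasing} on $[0,n-6]$: it sets $h(x)=g_2(x+2)-g_2(x)$, asserts $h(0)\ge 0$ and $h(n-6)\ge 0$, and then claims, ``by using numerical analysis,'' that $h$ (which it calls a polynomial, though it is not one) has no root on $[0,n-6]$. Your detection of non-monotonicity is numerically confirmed: for $n=20$ one has $g_2(12)\approx 9.8627 > g_2(14)\approx 9.8608$, so $h(12)<0$ and $g_2$ genuinely decreases near the right end of the domain; moreover the paper's asserted inequality $h(n-6)\ge 0$ already fails at $n=14$, where $h(n-6)\approx -0.0097$. (The paper's own commented-out tables for $n=20$ and $n=22$ record exactly this non-monotonic behaviour of $f_2$.) So the monotonicity route cannot be repaired, and your unimodality-plus-endpoint-comparison plan is the viable one. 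I checked your ingredients: the derivative formula, the equivalence $g_2'(x)\ge 0 \iff P(x)\ge 0$ with your quartic $P$, the leading coefficient $3n^2-7n+3$, and the endpoint values $g_2(0)=\frac{2(n-2)}{\sqrt{n+2}}+2\sqrt{\frac{n-3}{n-2}}$ and $g_2(n-6)=2\sqrt{2}+2\sqrt{\frac{(n-3)(n-4)}{n+2}}$ are all correct, and $g_2(0)<g_2(n-6)$ holds with asymptotic margin $2\sqrt{2}-2$. You are also right to restrict to $0\le x\le n-6$: read literally for all $x\ge 0$ the lemma is false, since $g_2(x)\to 2\sqrt{n(n-3)/(n-2)}$ as $x\to (n-2)^{-}$, which for $n=20$ is $\approx 8.69 < g_2(0)\approx 9.62$.

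Two corrections, one of which is the genuine remaining gap. First, your claim $P(n-6)<0$ does not hold for all relevant $n$: from $P(n-6)=(n-2)\bigl[32(n-3)(n-1)^2(n-2)^2-n^2(n+2)^3(n-4)\bigr]$ one computes $P(n-6)>0$ for even $n\le 16$ and $P(n-6)<0$ only for $n\ge 18$. The correct statement is therefore ``at most one sign change of $P$ on $[0,n-6]$, and downward'': for $n\le 16$ the function is increasing on the whole interval and the minimum at $x=0$ is immediate, while for $n\ge 18$ your rise-then-fall picture applies; either way the minimum sits at an endpoint and your endpoint comparison finishes. Second, the single-crossing (root-count) step you honestly flag is indeed the crux and must actually be closed --- by Sturm sequences, a discriminant analysis of the quartic in $x$ with parameter $n$, or a degree-lowering substitution --- before this is a proof rather than a plan; note that for $n\le 16$ you must also rule out an interior dip of $P$ below zero between two positive endpoint values, so the root count is needed in both regimes. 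That said, the paper is in no better shape at the corresponding step: its ``no root'' claim is unsupported and, as shown above, false, so a completed version of your plan would constitute the first correct proof of this lemma.
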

\begin{proof}
Let $n \geq 5, x_1 = x$ and $x_2 = x+2.$ In order to prove that $g_2(x)$ is increasing in $x,$ we need to prove that $g_2(x_2)-g_2(x_1) \geq 0.$ Take $h(x) = g_2(x_2) - g_2(x_1)$. Note that
\begin{eqnarray*}
 h(x) &=& -2 \, {\left(n - x - 2\right)} \sqrt{\frac{n - 2}{{\left(n + x + 2\right)} {\left(n - x - 2\right)}}} 
+ 2 \, {\left(n - x - 4\right)} \sqrt{\frac{n - 2}{{\left(n + x + 4\right)} {\left(n - x - 4\right)}}} \nonumber \\ 
&+& 2 \, {\left(x + 4\right)} \sqrt{\frac{n - 3}{{\left(2 \, n - x - 6\right)} {\left(x + 4\right)}}} - 2 \, {\left(x + 2\right)} \sqrt{\frac{n - 3}{{\left(2 \, n - x - 4\right)} {\left(x + 2\right)}}}.
\end{eqnarray*}
One can prove that 
$$h(0) = 2 \, {\left(n - 4\right)} \sqrt{\frac{n - 2}{{\left(n + 4\right)} {\left(n - 4\right)}}} + 2 \, \sqrt{2} - \frac{2 \, {\left(n - 2\right)}}{\sqrt{n + 2}} - 2 \, \sqrt{\frac{n - 3}{n - 2}} \geq 0,$$
and 
$$h(n-6)=-2 \, {\left(n - 4\right)} \sqrt{\frac{n - 3}{{\left(n + 2\right)} {\left(n - 4\right)}}} + 2 \, {\left(n - 2\right)} \sqrt{\frac{n - 3}{{\left(n - 2\right)} n}} - 2 \, \sqrt{2} + 2 \, \sqrt{\frac{n - 2}{n - 1}}\geq 0.$$ By using numerical analysis, we get that polynomial $h(x)$ has no root for $0 \leq x \leq n-6.$ So, $h(x) \geq 0,$ which implies that $g_2(x)$ is increasing and the result follows.
\end{proof}

\begin{lemma}
\label{l_6}

Let $n \geq 5$ and $x\geq 0.$ Let $f_2(x)$ be defined as 
\begin{eqnarray}
f_{2}(x) =  2 \, {\left(n - x - 2\right)} \sqrt{\frac{n - 2}{{\left(n + x + 2\right)} {\left(n - x - 2\right)}}} + 2 \, {\left(x + 2\right)} \sqrt{\frac{n - 3}{{\left(2 \, n - x - 4\right)} {\left(x + 2\right)}}} + \frac{2 \, \sqrt{x}}{x + 2}.
\end{eqnarray}
Then, $f_2(x)$ has its minimum  value in $f_2(0)$ for all $n \geq 5$.
\end{lemma}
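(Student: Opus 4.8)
The plan is to recognize that $f_2(x)$ splits as $f_2(x) = g_2(x) + \frac{2\sqrt{x}}{x+2}$, where $g_2(x)$ is precisely the function already treated in Lemma \ref{l_5}. This mirrors exactly the relationship between $f_1$ and $g_1$ exploited in Lemma \ref{l_2}, so the argument should be a short corollary rather than a fresh computation.

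First I would record the two elementary facts about the extra term $\frac{2\sqrt{x}}{x+2}$: it is nonnegative for every $x \geq 0$, and it vanishes at $x = 0$. The second fact gives the crucial identity $f_2(0) = g_2(0)$, since the first two summands of $f_2$ and $g_2$ coincide identically. I would state this explicitly so that the endpoint values line up.

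Next I would invoke Lemma \ref{l_5}, which establishes that $g_2(x)$ attains its minimum at $g_2(0)$, i.e. $g_2(x) \geq g_2(0)$ for all $x \geq 0$ and all $n \geq 5$. Chaining the inequalities then yields
\begin{eqnarray*}
f_2(x) = g_2(x) + \frac{2\sqrt{x}}{x+2} \;\geq\; g_2(x) \;\geq\; g_2(0) \;=\; f_2(0),
\end{eqnarray*}
which is the desired conclusion.

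I do not expect any genuine obstacle here, as all the analytic difficulty has already been absorbed into Lemma \ref{l_5}; the only point requiring a word of care is confirming that the two functions share the same first two terms so that $f_2(0) = g_2(0)$ holds, and that the appended term is sign-definite on the relevant range. Once these are noted, the result follows immediately and I would close the proof.
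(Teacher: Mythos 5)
Your proposal is correct and matches the paper's own proof essentially verbatim: the paper likewise observes $f_2(x) \geq g_2(x)$, invokes Lemma \ref{l_5} to get $g_2(x) \geq g_2(0)$, and concludes via $f_2(x) \geq g_2(x) \geq g_2(0) = f_2(0)$. Your only addition is making explicit that the appended term $\frac{2\sqrt{x}}{x+2}$ is nonnegative and vanishes at $x=0$, which the paper leaves implicit.
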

\begin{proof}
Note that $f_2(x) \geq g_2(x).$
From Lemma \ref{l_5},
$g_2(x) \geq g_2(0).$ Therefore,  
$f_2(x)  \geq g_2(x) \geq g_2(0) = f_2(0)$, and the result follows.
\end{proof}

Next, we state the main result of this paper.

\begin{theorem}
\label{th1}
Let $G \in B_{1}(n)$ be a graph of order $n \geq 9.$ If $n$ is odd, then
$$ABC_{GG}(G) \geq \frac{2(n-3)}{\sqrt{n-1}} + 2 \sqrt{\frac{n-3}{n-2}} + \frac{2\sqrt{n-5}}{n-3}.$$

If $n$ is even, then 
$$ABC_{GG}(G) \geq 2 \, \sqrt{\frac{n-3}{n-2}} + \frac{2 \, {\left(n - 2\right)}}{\sqrt{n+2}}.$$
Equality holds in both cases  if and only if $G \cong G^{1}_{0}(3,n-2)$.
\end{theorem}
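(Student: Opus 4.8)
The theorem is the synthesis of the closed-form evaluations of Section~3 (Lemmas~\ref{lem:b1_1}, \ref{lem:01}, \ref{lem:02}) with the monotonicity results of Section~4 (Lemmas~\ref{l_1}--\ref{l_6}). The plan is to treat the two parities of $n$ separately: in each case I use the relevant minimization lemma to single out $G^1_0(3,n-2)$ as the (unique) minimizer of $ABC_{GG}$ over $B_1(n)$, and then read off the numerical bound by substituting $p=3,\ q=n-2$ into the matching formula.

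For odd $n$, every graph in $B_1(n)$ has either both cycles odd or both cycles even, so its index equals $f_1(x)$ or $f_2(x)$ under the parametrization $p=3+x,\ q=n-2-x$. Lemma~\ref{l_4} already compares the two families and yields, for all $G\in B_1(n)$ with $n\ge 9$ odd, $ABC_{GG}(G)\ge ABC_{GG}\bigl(G^1_0(3,n-2)\bigr)$. Since $n$ odd forces $n-2$ odd, the extremal graph is built from the two odd cycles $C_3$ and $C_{n-2}$, so I evaluate its index through Lemma~\ref{lem:b1_1} with $p=3,\ q=n-2$: the term $\tfrac{2\sqrt{p-3}}{p-1}$ vanishes, and after simplifying $p+q-4=n-3$ together with $p+2q-3$, $2p+q-3$ and $q-1$ the remaining three terms collapse to the claimed right-hand side (equivalently, this value is $f_1(0)$).

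For even $n$, exactly one of the two cycles is odd and the other even, so $B_1(n)$ is a single family whose index is the function $f_2(x)$ of the even-$n$ subsection (with $p=3+x,\ q=n-2-x$). Lemma~\ref{l_6} states that this $f_2$ is minimized at $x=0$, giving $ABC_{GG}(G)\ge f_2(0)=ABC_{GG}\bigl(G^1_0(3,n-2)\bigr)$ for every $G\in B_1(n)$. Here $n-2$ is even, so $C_3$ is the odd cycle and $C_{n-2}$ the even one; substituting $p=3,\ q=n-2$ into Lemma~\ref{lem:02} again kills the $\sqrt{p-3}$ term, and simplifying $p+q-3=n-2$ and $2p+q-2=n+2$ leaves precisely $2\sqrt{\tfrac{n-3}{n-2}}+\tfrac{2(n-2)}{\sqrt{n+2}}$.

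For the equality characterization I use that the inequalities behind the minimization lemmas are strict for $x>0$: the strict increase of $g_2$ (Lemma~\ref{l_5}) gives $f_2(x)>f_2(0)$ in the even case, and the strict increase of $g_1$ (Lemma~\ref{l_1}) together with the strict comparisons $f_1(0)<f_2(0)$ (and their small-$k$ analogues) from Lemma~\ref{l_4} rules out every competitor in the odd case; hence $G\cong G^1_0(3,n-2)$ is forced. Granting the Section~4 lemmas, the theorem itself is bookkeeping and substitution, so the only care needed is matching the parity split to the correct closed form and confirming that $G^1_0(3,n-2)$ realizes the minimizing structure in each regime. The real difficulty lies one level below, inside Lemma~\ref{l_4}: establishing $f_1(0)<f_2(0)$ requires the sign analysis of $h(k)$ split into pieces $(i)$ and $(ii)$ via the substitutions $u=\sqrt{k}$ and $u=\sqrt{k-3}$, plus the finitely many exceptional checks for $5\le k\le 10$, where the even-cycle minimizer is $f_2(k-4)$ or $f_2(k-5)$ instead of $f_2(0)$.
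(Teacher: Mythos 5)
Your proposal is correct and follows essentially the same route as the paper: the paper's own proof of Theorem \ref{th1} is exactly the two-line reduction to Lemma \ref{l_4} (odd $n$) and Lemma \ref{l_6} (even $n$), with the displayed bounds obtained, as you do, by substituting $p=3$, $q=n-2$ into Lemmas \ref{lem:b1_1} and \ref{lem:02}, and your equality discussion via strictness of the monotonicity lemmas merely fills in a step the paper leaves implicit. One correction to your odd-case bookkeeping, though: at $p=3$, $q=n-2$ one has $2p+q-3=n+1$, so the surviving terms collapse to $\frac{2(n-3)}{\sqrt{n+1}} + 2\sqrt{\frac{n-3}{n-2}} + \frac{2\sqrt{n-5}}{n-3}$ (this is also $f_1(0)$ as written inside the proof of Lemma \ref{l_4}); hence the $\sqrt{n-1}$ in the theorem's printed odd-$n$ bound is a typo for $\sqrt{n+1}$, and your claim that the terms ``collapse to the claimed right-hand side'' is true only for that corrected form --- as literally printed, the odd-$n$ bound is strictly larger than $ABC_{GG}(G^{1}_{0}(3,n-2))$ itself, so neither the inequality nor the equality case can hold verbatim.
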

\begin{proof}
Let $G \in B_{1}(n).$ Suppose that $n$ is odd. From Lemma \ref{l_4}, $ABC_{GG}(G) = f(x) \geq f(0) =  ABC_{GG}(G_{0}^{1}(3,n-2)).$  
Now, suppose that $n$ is even. From Lemma \ref{l_6}, $ABC_{GG}(G) = f^{{\prime}\,{\prime}}(x) \geq f^{{\prime}\,{\prime}}(0) =  ABC_{GG}(G_{0}^{1}(3,n-2))$ and the result follows. 

\end{proof}

In Figure \ref{B1_geralf}, the extremal graph $G_{0}^{1}(3,8)$ is displayed. %\textcolor{red}{Diego, colocar o ciclo C8 no lado direito da Figura 5 abaixo.}

\begin{figure}[!h]
    \centering
    \includegraphics[height=3.0cm]{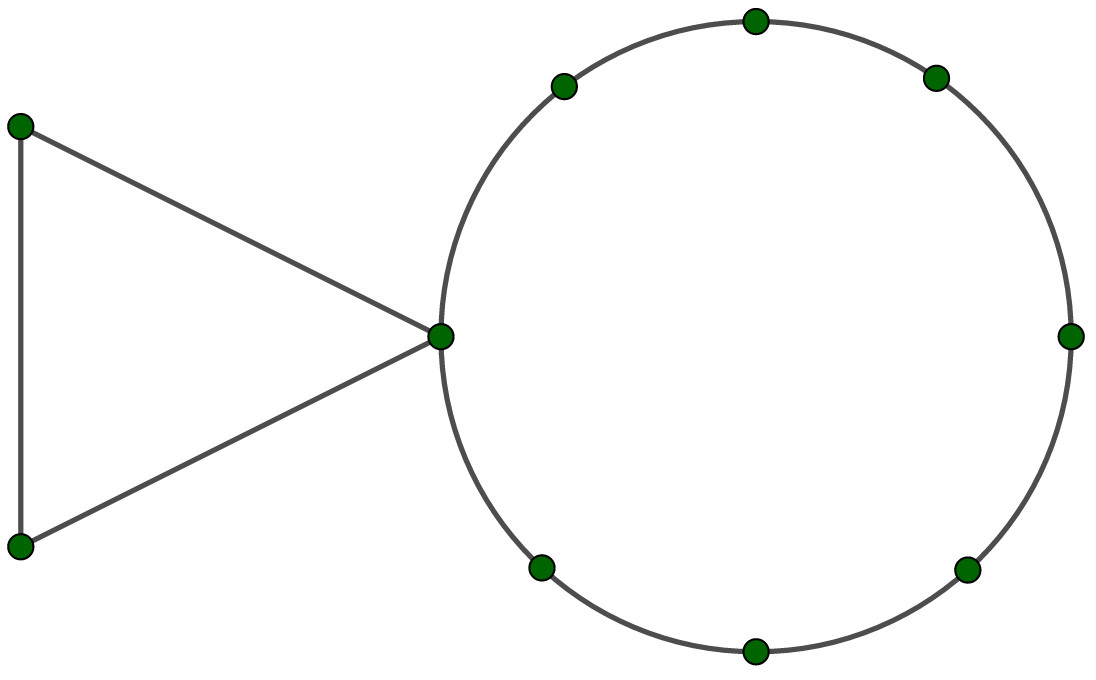}
    \caption{Graph of $B_1(n)$ family with minimal value of the $ABC_{GG}$ index for  $n = 10$.}
    \label{B1_geralf}
\end{figure}

Note that Theorem \ref{th1} states the extremal graphs for $n \geq 9.$ In Figure \ref{B1_n10f}, we display all graphs up to 10 vertices that are extremal to the $ABC_{GG}$ index in the family $B_{1}(n)$ obtained by exhaustive computational search. 

\begin{figure}[!h]
    \centering
    \includegraphics[height=7.0cm]{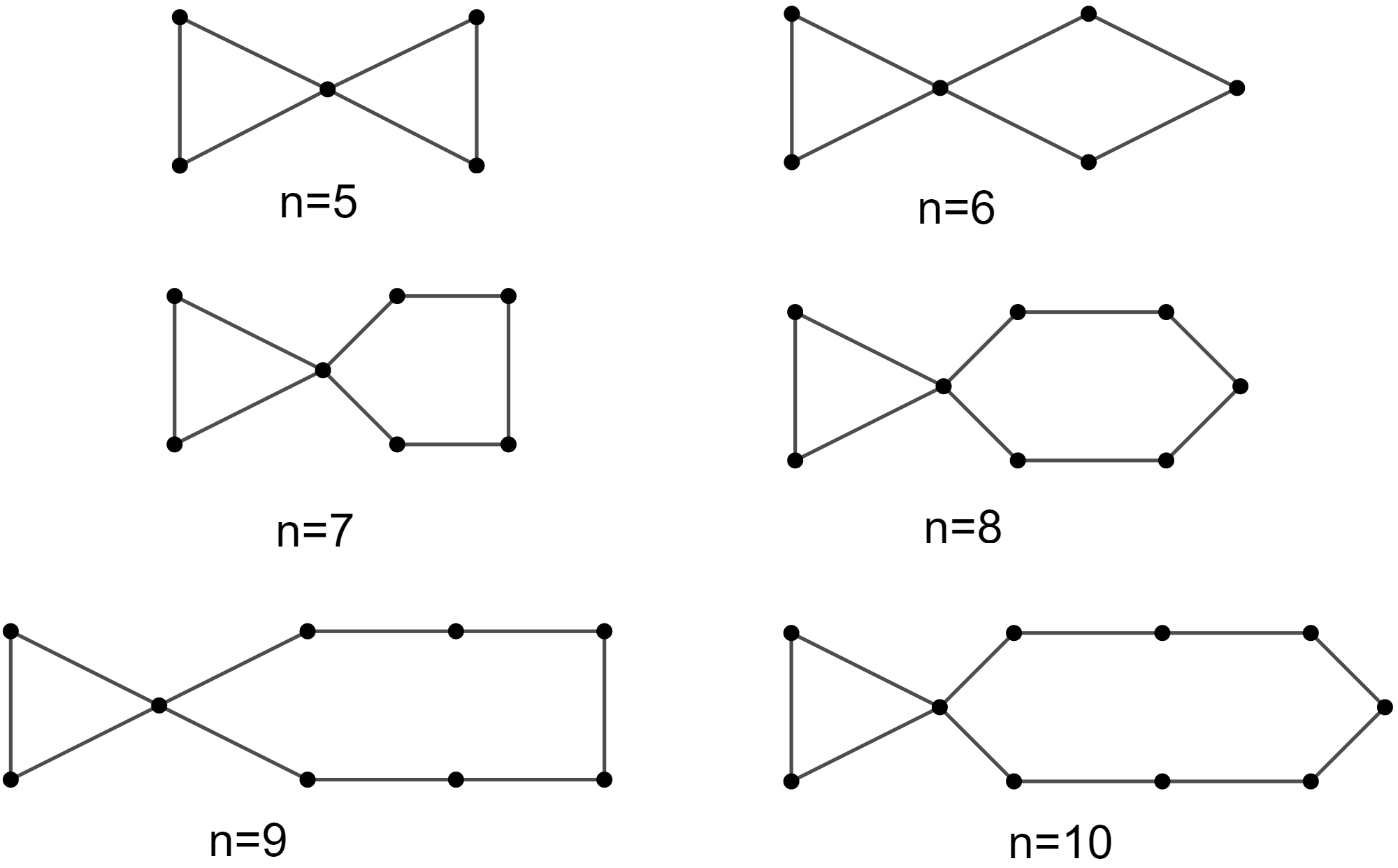}
    \caption{Graph of $B_1(n)$ family with minimal value of the $ABC_{GG}$ index for  $5 \leq n \leq 10$.}
    \label{B1_n10f}
\end{figure}

%\textcolor{red}{
%Note that Theorem \ref{th1} states the extremal graphs for $n \geq 9.$ In Figure \ref{B1_geralf}, we display all graphs up to 10 vertices that are extremal to the $ABC_{GG}$ index in the family $B_{1}(n)$ obtained by exhaustive computational search. 
%
%\includegraphics[]{}
%}

\section{Conclusion}

We finish this paper by presenting two conjectures related to the $ABC_{GG}$ index for any bicyclic graph. The following conjectures were motivated by computational experiments for all bicyclic graphs up to 16 vertices. The computational routines in Python are freely available at \mbox{https://github.com/20445/ProjetoTeste/blob/master/README.md.
}

Let $\mathcal{B'}_n$ be the family of all bicyclic graphs on $n$ vertices. The next conjecture states a lower bounds to the  $ABC_{GG}$ index among all graphs in $\mathcal{B'}_n.$ It is worth mentioning that the extremal graphs belong to the family $\mathcal{B}_{n},$ that is, the bicylic graphs with no pendant vertices. This fact makes the study of all graphs in $B_1(n), B_2(n),$ and $B_3(n)$ useful to prove the general case.

\begin{conjecture}
\label{conj2}
Let $G \in \mathcal{B}^{\prime}_n$ be a bicyclic graph of order $n \geq 9$. If $n$ is odd, then 
$$ABC_{GG}(G) \geq 2(n+1)\sqrt{\frac{n-2}{n^2-1}}.$$
If $n$ is even, then 
$$
ABC_{GG}(G) \geq \frac{6}{n}\sqrt{n-2} + 2(n-2)\sqrt{\frac{1}{n+2}}. 
$$
For $n$ odd, equality holds if and only if $G \cong B_{3}(4,2,n-1)$. For $n$ even, equality holds if and only if $G \cong B_{3}(6,3,n-2).$
\end{conjecture}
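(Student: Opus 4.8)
I would attack Conjecture \ref{conj2} in two phases. First, a \emph{reduction phase} showing that the minimum of $ABC_{GG}$ over the whole class $\mathcal{B}'_n$ of bicyclic graphs is attained on the pendant-free subclass $\mathcal{B}_n=B_1(n)\cup B_2(n)\cup B_3(n)$; second, a \emph{minimization phase} locating the minimizer inside $\mathcal{B}_n$, which by the conjecture should be the theta graph $B_3(4,2,n-1)$ when $n$ is odd and $B_3(6,3,n-2)$ when $n$ is even, together with the stated values. This matches the remark preceding the conjecture, and it is exactly why the explicit study of $B_1(n)$ in the preceding two sections is a genuine ingredient rather than an isolated special case.

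\textbf{Minimization phase (inside $\mathcal{B}_n$).} I would transport the method of the preceding two sections to the families $B_2(p,l,q)$ and $B_3(p,l,q)$. The first step is closed forms for $ABC_{GG}$ on these families, in the spirit of Lemmas \ref{lem:b1_1}, \ref{lem:01} and \ref{lem:02}: label the vertices, exploit the cyclic symmetry to count the sets contributing to $n_u$ and $n_v$ on every edge, and split into the parity cases for the lengths of the two cycles and of the connecting (resp. shared) path. Here it is convenient that $B_3(p,l,q)$ is the theta graph whose two degree-$3$ hubs are joined by three internally disjoint paths of lengths $l$, $p-l$ and $q-l$, so that the conjectured extremal graphs are $\Theta_{2,2,n-3}$ and $\Theta_{3,3,n-5}$. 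With the formulas available, I would fix $n$, write each family index as a function of a single shift parameter $x$ (the number of vertices moved between the two cycles, or between the shared path and a cycle), and find its minimum by the derivative and monotonicity arguments already used in Lemmas \ref{l_1}--\ref{l_6}: show the function is either monotone in $x$ or has a unique admissible interior critical point, so the minimum sits at a boundary value. The phase closes with a three-way comparison establishing $\min_{B_3(n)}ABC_{GG}<\min_{B_1(n)}ABC_{GG}$ (the latter known from Theorem \ref{th1}) and $\min_{B_3(n)}ABC_{GG}<\min_{B_2(n)}ABC_{GG}$, together with a check that the $B_3$ minimum equals the right-hand sides of Conjecture \ref{conj2}.

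\textbf{Reduction phase, the main obstacle.} The hard part is proving that a pendant vertex never helps, i.e. that every $G\in\mathcal{B}'_n$ with a vertex of degree one satisfies $ABC_{GG}(G)>\min_{H\in\mathcal{B}_n}ABC_{GG}(H)$. My plan is a pendant-absorbing transformation: write $G$ as its $2$-core (a bicyclic pendant-free graph) with trees attached, pick a leaf $w$ at the tip of a longest attached path, and relocate its incident edge so as to strictly lengthen a cycle or the shared/connecting path, reducing the number of leaves by one while keeping $n$ fixed; iterating lands in $\mathcal{B}_n$. The obstruction is that relocating one leaf is \emph{not} a local operation for $ABC_{GG}$: the quantities $n_u,n_v$ change on every edge along the path between the old and new attachment points, and on the cycle edges whose sets of equidistant vertices shift. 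I would therefore isolate the bounded set of edges whose contribution actually changes, write the net change $\Delta=ABC_{GG}(G)-ABC_{GG}(G')$ as a sum over that set, and control its sign. I expect this to break into finitely many configurations (according to the degree of the attachment vertex, its distance to the nearest hub, and the relevant parities), each disposed of by a monotone-function estimate of the same flavour as the factorization $t(x)=16(k-1)(k-x-3)^3$ in Lemma \ref{l_1} or the root analyses in Lemmas \ref{l_3}--\ref{l_5}. Getting the sign of $\Delta$ uniformly right across all these configurations, rather than within any single family, is where I expect the real difficulty to lie; and the same strict inequalities, read in reverse, would simultaneously pin down the uniqueness of the extremal graphs $B_3(4,2,n-1)$ and $B_3(6,3,n-2)$ asserted in the equality cases.
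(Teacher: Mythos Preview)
The paper contains no proof of this statement: Conjecture~\ref{conj2} is explicitly presented as an open conjecture, supported only by computational experiments on all bicyclic graphs with at most $16$ vertices. There is therefore nothing in the paper against which your proposal can be compared.

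That said, your plan is a reasonable research outline rather than a proof. The minimization phase is plausible and does parallel the machinery of Lemmas~\ref{lem:b1_1}--\ref{l_6}, though you should be aware that even for $B_1(n)$ the paper's arguments already lean on numerical root-finding (Lemmas~\ref{l_3} and~\ref{l_5}); the analogous analysis for $B_2(n)$ and $B_3(n)$ involves an extra parameter $l$, so reducing to a single shift variable $x$ will require an additional layer of optimization that you have not addressed. The reduction phase is, as you correctly identify, the genuine obstacle: your pendant-absorption move changes $n_u,n_v$ globally, and you have not given any mechanism---beyond hoping for finitely many tractable configurations---for controlling the sign of the resulting $\Delta$. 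Absent such a mechanism, the proposal remains a sketch of where the work lies rather than a proof, which is consistent with the paper's own decision to leave the statement as a conjecture.
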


 Figure \ref{fig11} displays the extremal graphs of Conjecture \ref{conj2}  according to the partity of $n.$ 

\begin{figure}[h]
    \centering
    \includegraphics[height=4.0cm]{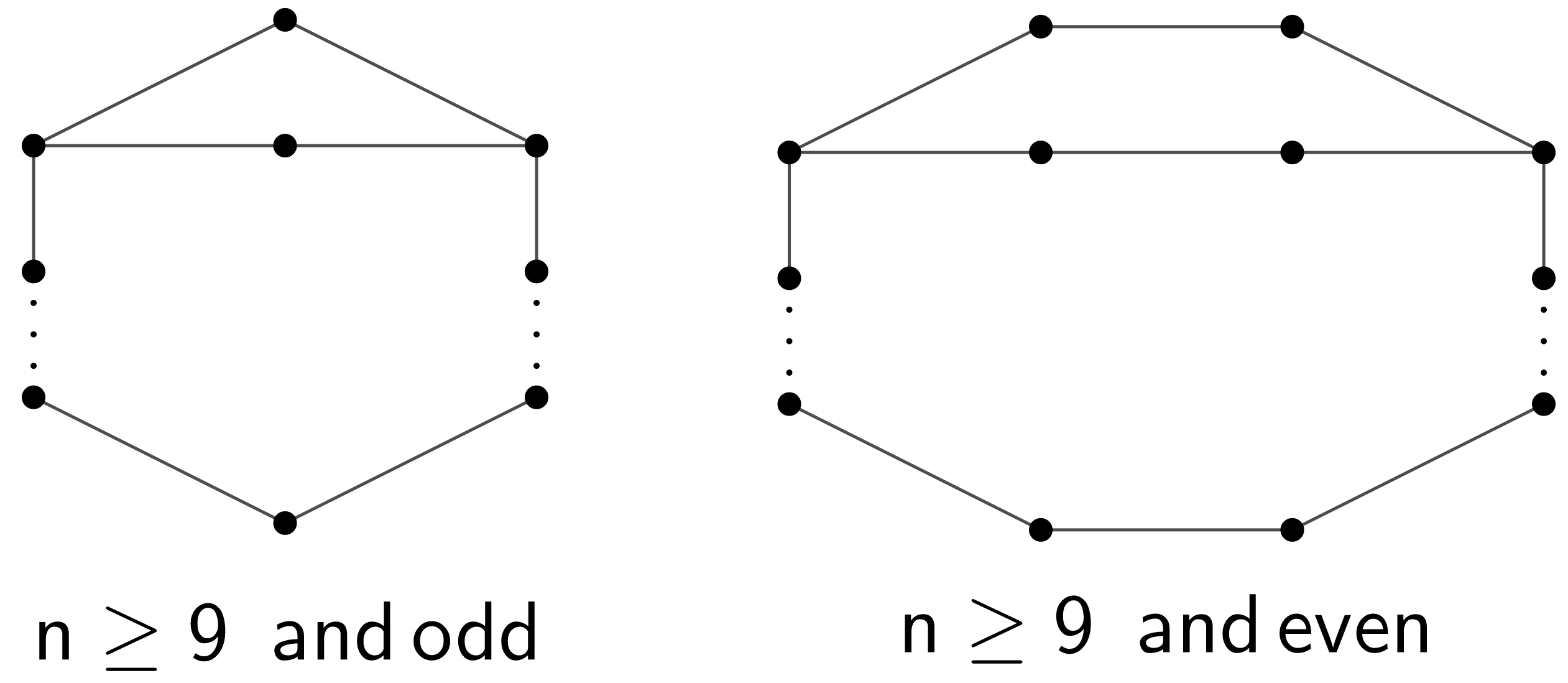}
    \caption{Bicyclic graphs with minimal value of $ABC_{GG}$ index for $n\geq 9$.}
    \label{fig11}
\end{figure}

Next, we present a conjecture about the upper bound to the $ABC_{GG}$ index for all bicyclic graphs. Let  $H$ be the graph obtained by adding $n-4$ pendant vertices to one  vertex of degree $3$ of the  complete graph a $K_4$ minus an edge. Figure \ref{fig12} displays the graphs with maximal $ABC_{GG}$ index for $n \geq 4,$ and the graph $H$ is the last graph for $n\geq 8.$

\begin{figure}[h]
    \centering
    \includegraphics[height=6.5cm]{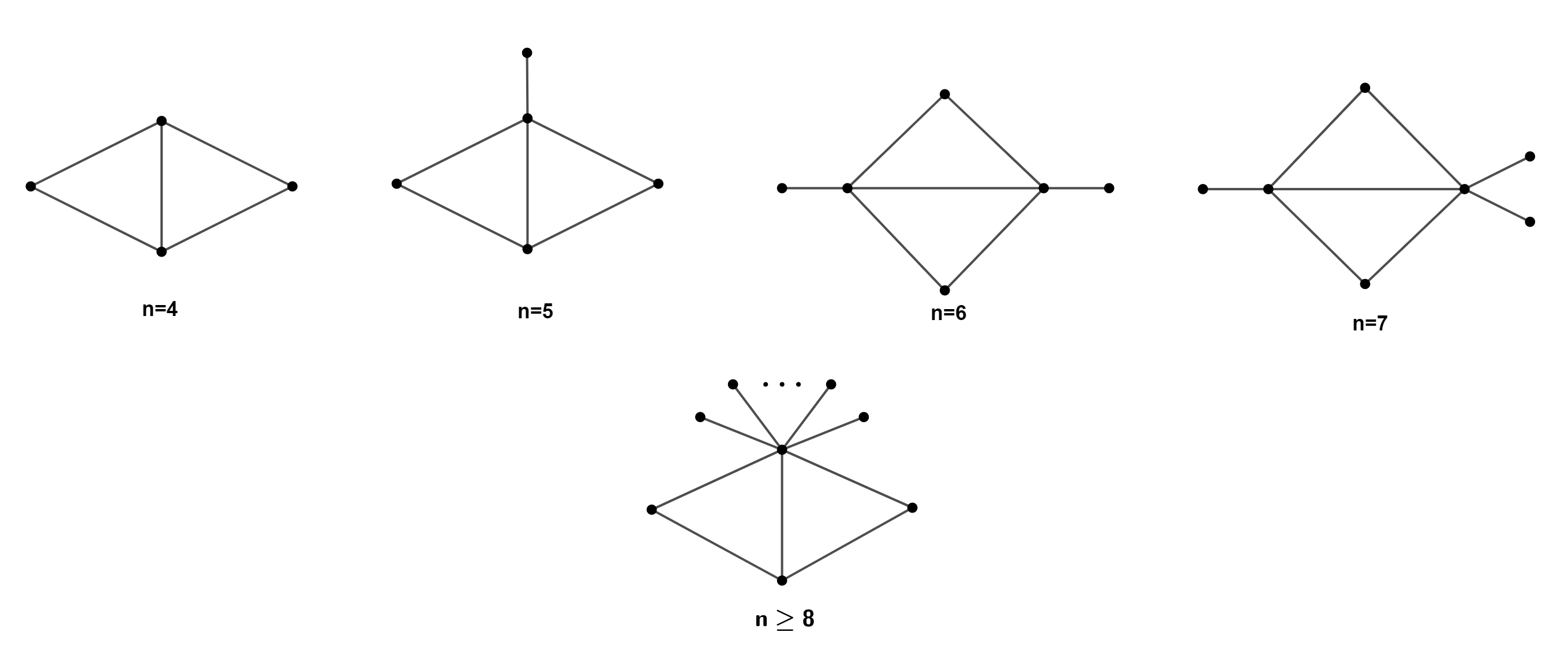}
    \caption{Bicyclic graphs with maximal value of $ABC_{GG}$ index for $n \geq 4.$}
    \label{fig12}
\end{figure}

\begin{conjecture}
\label{conj3}
Let $G \in \mathcal{B'}_n$ a bicyclic graph with order $n\geq 8$. Then, 

$$ABC_{GG}(G) \leq
(n-4)\sqrt{\frac{n-2}{n-1}}+\sqrt{\frac{n-4}{n-3}}+  
2\sqrt{\frac{n-3}{n-2}}+\frac{\sqrt{2}}{2}.$$
Equality holds if and only if $G$ is isomorphic to $H.$
\end{conjecture}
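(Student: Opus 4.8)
The plan is to first confirm that the right-hand side is exactly $ABC_{GG}(H)$, and then to prove it dominates $ABC_{GG}(G)$ for every $G\in\mathcal{B'}_n$ by combining a sharp per-edge estimate with a sequence of structure-simplifying transformations. For bookkeeping, write $f(u,v)=\sqrt{(n_u+n_v-2)/(n_u n_v)}$ as in the earlier lemmas. The crucial first ingredient is a \emph{per-edge bound}: since $n_u$ and $n_v$ count disjoint vertex sets, we have $n_u+n_v\le n$ and $n_u,n_v\ge 1$, and writing $f(u,v)^2=(n_u+n_v-2)/(n_u n_v)$ one checks that for a fixed sum $s=n_u+n_v$ the value is largest when $\min(n_u,n_v)=1$ (the product $n_u n_v$ is then smallest), and that $(s-2)/(s-1)$ increases in $s$. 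Hence $f(u,v)\le\sqrt{(n-2)/(n-1)}$, with equality if and only if $uv$ is a pendant edge (one endpoint a leaf, and consequently no vertex equidistant from its two ends). A direct evaluation of the five core edges of $K_4-e$ together with the $n-4$ pendant edges of $H$ then yields the stated closed form and settles the equality case.

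Next I would invoke the standard decomposition of a bicyclic graph: $G$ has a unique $2$-core $B$ (its maximal subgraph of minimum degree at least $2$), which is one of the three types $B_1(n'),B_2(n'),B_3(n')$ on some $n'\le n$ vertices, and $G$ is obtained from $B$ by attaching rooted trees at vertices of $B$. The $n+1$ edges split into the $n'+1$ edges of $B$ and the $n-n'$ tree edges. The per-edge bound already gives
$ABC_{GG}(G)\le(n-n')\sqrt{(n-2)/(n-1)}+\sum_{uv\in E(B)}f(u,v)$,
so the problem reduces to showing that (i) the extremal attached trees are stars of leaves, (ii) the extremal core is the smallest possible, $K_4-e$ (so $n'=4$ and the pendant count $n-n'$ is maximal), and (iii) all leaves should hang from a single degree-$3$ vertex of that core.

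Each reduction would be effected by an explicit transformation shown not to decrease $ABC_{GG}$. For (i), a \emph{tree-to-star} move replaces a pendant tree rooted at a core vertex by a star of the same number of leaves at that vertex: each of the tree edges that previously had a non-leaf endpoint (hence $f<\sqrt{(n-2)/(n-1)}$) becomes a genuine pendant edge attaining the maximum, while the finitely many affected core and near-core terms are controlled directly. For (ii), a \emph{core-shrinking} move replaces a base of order $n'>4$ by $K_4-e$, reassigning the freed $n'-4$ vertices as leaves; here Lemmas \ref{lem:b1_1}--\ref{lem:02} and their $B_2,B_3$ analogues supply exact values for the core sums, and one exploits that interior edges of a larger base have both $n_u,n_v\ge 2$ and so fall strictly below $\sqrt{(n-2)/(n-1)}$. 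For (iii), one compares the few inequivalent attachment points on $K_4-e$ by direct computation, and checks strictness throughout to obtain uniqueness of $H$.

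The main obstacle is that $n_u$ and $n_v$ are \emph{global} distance quantities, so a local modification of $G$ can alter the $f$-values of many edges simultaneously, including the delicate count of equidistant vertices (which are discarded from both $n_u$ and $n_v$). Making the transformation inequalities rigorous—in particular verifying that the core contribution never grows enough to offset the gain from turning interior tree edges into pendant edges—is where the real difficulty lies; I expect this to demand a careful case analysis of how distances to the two ends of each core edge shift under each move, rather than a single clean inequality.
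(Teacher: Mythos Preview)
The statement you are addressing is presented in the paper as a \emph{conjecture}, not a theorem; the authors give no proof, only computational evidence for $n\le 16$. There is therefore no argument in the paper to compare your proposal against.

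What you have written is a strategic outline, not a proof. The per-edge bound $f(u,v)\le\sqrt{(n-2)/(n-1)}$, with equality precisely on pendant edges, is correct and is the natural first observation, and organising the problem via the $2$-core with attached trees is the right framework. But the decisive step---showing that each of your three transformations (tree-to-star, core-shrinking, leaf consolidation) does not decrease $ABC_{GG}$---is never carried out, and you explicitly concede as much in your final paragraph. Because $n_u$ and $n_v$ depend on global distances, relocating even a single vertex perturbs the contribution of \emph{every} edge simultaneously, including the equidistant counts that are discarded from both $n_u$ and $n_v$. The heuristic ``interior edges sit strictly below the pendant bound, so trading one for a pendant edge is a gain'' ignores these cross-effects entirely, and there is no a priori reason they should be uniformly favourable. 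Absent a concrete inequality controlling the net change under each move, what you have is an informed plan of attack on an open problem; the paper's decision to leave the statement as a conjecture reflects exactly this difficulty.

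One small point worth rechecking: you assert that direct evaluation of $ABC_{GG}(H)$ recovers the displayed closed form. With the leaves attached at a degree-$3$ vertex $c$ of $K_4-e$, the two core edges not incident to $c$ each have $n_u=1$, $n_v=2$ and hence each contribute $\sqrt{1/2}$; the constant term then comes out as $\sqrt{2}$ rather than $\sqrt{2}/2$. Either the displayed formula carries a misprint or a slightly different $H$ is intended, so this step deserves a second look before you build on it.
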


\end{document}